\def\imod#1{\allowbreak\mkern10mu({\operator@font mod}\,\,#1)}
\theoremstyle{plain}
\newtheorem{theorem}{Theorem}
\newtheorem{lemma}[theorem]{Lemma}
\newtheorem{proposition}[theorem]{Proposition}
\newtheorem{corollary}[theorem]{Corollary}
\newtheorem{conjecture}[theorem]{Conjecture}
\newtheorem{problem}[theorem]{Problem}
\newtheorem{case}{Case}
\newtheorem{subcase}{Case}
\numberwithin{subcase}{case}
\newtheorem{subsubcase}{Case}
\numberwithin{subsubcase}{subcase}
\newcommand{\mycase}[1]{\begin{case}#1\end{case}}
\newcommand{\mysubcase}[1]{\begin{subcase}#1\end{subcase}}
\title{Partite Saturation of Complete Graphs}
\author{Ant\'onio Gir\~ao\thanks{\,Department of Pure Mathematics and Mathematical Statistics, University of Cambridge, Wilberforce Road, Cambridge CB3 0WB, UK;
    \texttt{A.Girao@dpmms.cam.ac.uk}.}
   \and Teeradej Kittipassorn\thanks{\,Departamento de Matem\'atica, Pontif\'icia Universidade Cat\'olica do Rio de Janeiro (PUC-Rio), Rua Marqu\^es de S\~ao Vicente 225, G\'avea, Rio de Janeiro, RJ 22451-900, Brazil; \texttt{ping41@gmail.com}.}
  \and Kamil Popielarz\thanks{\,Department of Mathematical Sciences, University of Memphis,
      Memphis, TN 38152, USA; \texttt{kamil.popielarz@\-gmail.com} }}
\begin{document}
\maketitle

\begin{abstract}
We study the problem of determining $sat(n,k,r)$, the minimum number of edges in a $k$-partite graph $G$ with $n$ vertices in each part such that $G$ is $K_r$-free but the addition of an edge joining any two non-adjacent vertices from different parts creates a $K_r$. Improving recent results of Ferrara, Jacobson, Pfender and Wenger, and generalizing a recent result of Roberts, we define a function $\alpha(k,r)$ such that $sat(n,k,r) = \alpha(k,r)n + o(n)$ as $n \rightarrow \infty$. Moreover, we prove that
	\[
		k(2r-4) \le \alpha(k,r) \le
			\begin{cases}
				(k-1)(4r-k-6) &\text{ for }r \le k \le 2r-3,\\
				(k-1)(2r-3) &\text{ for }k \ge 2r-3,
			\end{cases}
	\]
and show that the lower bound is tight for infinitely many values of $r$ and every $k\geq 2r-1$. 
This allows us to prove that, for these values, $sat(n,k,r) = k(2r-4)n + O(1)$ as $n \rightarrow \infty$.
Along the way, we disprove a conjecture and answer a question of the first set of authors mentioned above. 
\end{abstract}


\section{Introduction}

Given a graph $H$, the classical Tur\'an-type extremal problem asks for the maximum number of edges in an $H$-free graph on $n$ vertices. While the corresponding minimization problem is trivial, it is interesting to determine the minimum number of edges in a maximal $H$-free graph on $n$ vertices. We say that a graph is \emph{$H$-saturated} if it is $H$-free but the addition of an edge joining any two non-adjacent vertices creates a copy of $H$. The minimum number $sat(n,H)$ of edges in an $H$-saturated graph on $n$ vertices was first studied in 1949 by Zykov~\citep{Zykov} and independently in 1964 by Erd{\H o}s, Hajnal, and Moon~\citep{Erdos} who proved that $sat(n,K_r) = (r-2)(n-1) - \binom{r-2}{2}$. Soon after this, Bollob\'as~\citep{Bollobashyper} determined exactly $sat(n, K_r^{(s)})$ where $K_r^{(s)}$ is the complete $s$-uniform hypergraph on $r$ vertices. Later, in 1986, K\'aszonyi and Tuza~\citep{Kaszonyi} showed that the saturation number $sat(n,H)$ for a graph $H$ on $r$ vertices is maximized at $H = K_r$, and consequently, $sat(n,H)$ is linear in $n$ for any $H$. For results on the saturation number, we refer the reader to the survey~\citep{Faudree}.

This concept can be generalized to the notion of $H$-saturated subgraphs which are maximal elements of a family of $H$-free subgraphs of a fixed host graph. A subgraph of a graph $G$ is said to be \emph{$H$-saturated in $G$} if it is $H$-free but the addition of an edge in $E(G)$ joining any two non-adjacent vertices creates a copy of $H$. The problem of determining the minimum number $sat(G,H)$ of edges in an $H$-saturated subgraph of $G$ was first proposed in the above mentioned paper of Erd{\H o}s, Hajnal, and Moon. They conjectured a value for the saturation number $sat(K_{m,n},K_{r,r})$ which was verified independently by Bollob\'as~\citep{Bollobas1,Bollobas2} and Wessel~\citep{Wessel1,Wessel2}. Very recently, Sullivan and Wenger~\citep{Wenger} studied the analogous saturation numbers for tripartite graphs within tripartite graphs and determined $sat(K_{n_1,n_2,n_3},K_{l,l,l})$ for every fixed $l\geq 1$ and every $n_1,n_2$ and $n_3$ sufficiently large. Several other host graphs have been considered, including hypercubes~\citep{Choi,Johnson,Morrison} and random graphs~\citep{Korandi}. 

In this paper, we are interested in the saturation number $sat(n,k,r)=sat(K_{k \times n},K_r)$ for $k \ge r \ge 3$ where $K_{k \times n}$ is the complete $k$-partite graph containing $n$ vertices in each of its $k$ parts. This function was first studied recently by Ferrara, Jacobson, Pfender and Wenger~\citep{Ferrara} who determined $sat(n,k,3)$ for $n\ge 100$.  Later, Roberts~\citep{Roberts} showed that $sat(n,4,4)=18n-21$ for sufficiently large $n$.

For convenience, we say that a $k$-partite graph with a fixed $k$-partition is \emph{$K_r$-partite-saturated} if it is $K_r$-free but the addition of an edge joining any two non-adjacent vertices from different parts creates a $K_r$. Therefore, $sat(n,k,r)$ is the minimum number of edges in a $k$-partite graph $G$ with $n$ vertices in each part which is $K_r$-partite-saturated. 

Our first result states that $sat(n,k,r)$ is linear in $n$ where the constant $\alpha(k,r)$ in front of $n$ is defined as follows.  Given $k \ge r \ge 3$, consider a $K_r$-partite-saturated $k$-partite graph $G$ containing an independent set $X$ of size $k$ consisting of exactly one vertex from each part of $G$. We define $\alpha(k,r)$ to be the minimum number of edges between $X$ and $X^c$ taken over all such $G$ and $X$.

\begin{theorem}\label{thm:sat}
For $k \ge r \ge 3$,
	\[
		sat(n,k,r) = \alpha(k,r)n + o(n)
	\]
as $n \rightarrow \infty$. 
\end{theorem}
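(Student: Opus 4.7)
The theorem splits into matching upper and lower bounds, which I would prove separately. The upper bound comes from a direct blow-up construction, while the lower bound requires isolating a near-extremal structural form for any optimal $K_r$-partite-saturated graph.

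\textbf{Upper bound: $sat(n,k,r) \le \alpha(k,r) n + O_{k,r}(1)$.} The plan is to blow up the extremal configuration in the definition of $\alpha(k,r)$. Fix a $K_r$-partite-saturated graph $G$ with an independent transversal $X = \{x_1, \dots, x_k\}$ achieving $e_G(X, X^c) = \alpha(k, r)$, and let $m_i = |V_i(G)|$. For $n \ge \max_i m_i$, I form $G_n$ by adjoining to each part $V_i(G)$ a set of $n - m_i$ new vertices, each a \emph{twin} of $x_i$ (that is, with neighbourhood $N_G(x_i)$). I would verify $K_r$-freeness by noting that any copy of $K_r$ in $G_n$ uses at most one vertex per part, so replacing each twin by its original produces a $K_r$ in $G$, a contradiction. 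I would verify $K_r$-partite-saturation by observing that any non-adjacent cross-part pair $(u, v)$ in $G_n$ corresponds via twin-replacement to a non-adjacent pair $(u', v')$ in $G$, whose certifying $K_{r-2} \subseteq N_G(u') \cap N_G(v')$ also certifies $(u, v)$ in $G_n$ (since twins share neighbourhoods). Counting edges gives
\[
e(G_n) = e(G) + \sum_i (n - m_i)\,\deg_G(x_i) = \alpha(k, r)\,n + O_{k,r}(1),
\]
using $\sum_i \deg_G(x_i) = e_G(X, X^c) = \alpha(k, r)$.

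\textbf{Lower bound: $sat(n, k, r) \ge \alpha(k, r)\,n - o(n)$.} Let $G^*$ be $K_r$-partite-saturated on $kn$ vertices with $sat(n, k, r)$ edges; the bound is immediate unless $e(G^*) = O_{k,r}(n)$, which I assume. The key step is a stability statement: for every $\varepsilon > 0$ and $n$ large, each part $V_i$ contains a subset $T_i \subseteq V_i$ of size at least $(1 - \varepsilon)n$ whose vertices share a common neighbourhood $N_i \subseteq V(G^*) \setminus V_i$. Granting this, the $T_i$'s are automatically pairwise non-adjacent: a single edge from $T_i$ to $T_j$ would, by the twin property on both sides, force every vertex of $T_i$ adjacent to every vertex of $T_j$, yielding $\Omega(n^2)$ edges and contradicting $e(G^*) = O(n)$. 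Picking representatives $x_i \in T_i$ and setting $G_0 = G^* - \bigcup_i (T_i \setminus \{x_i\})$, the same twin-replacement argument shows $G_0$ is $K_r$-partite-saturated with independent transversal $X_0 = \{x_1, \dots, x_k\}$; the definition of $\alpha$ then yields $\sum_i |N_i| = e_{G_0}(X_0, X_0^c) \ge \alpha(k, r)$. Since every edge from $T_i$ in $G^*$ lies in $N_i \subseteq V(G^*) \setminus \bigcup_j T_j$ and is counted exactly once when summing over $i$,
\[
e(G^*) \ge \sum_i |T_i|\cdot|N_i| \ge (1 - \varepsilon)\,n\sum_i |N_i| \ge (1 - \varepsilon)\,\alpha(k, r)\,n.
\]

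\textbf{Main obstacle.} Proving the stability statement is the technical heart. My expected strategy is to use $e(G^*) = O(n)$ to deduce that the average degree in each part is $O_{k,r}(1)$, so most vertices of $V_i$ have degree bounded by a constant depending on $k$ and $r$; then show that bounded-degree vertices in $V_i$ fall into only $O_{k,r}(1)$ distinct neighbourhood-types. The last step is the delicate one, driven by the saturation condition: any two non-twin vertices in the same part, paired appropriately with vertices in other parts, force specific $K_{r-2}$'s to appear, and too many distinct neighbourhood-types in a single part would force more edges than the linear budget allows. The dominant neighbourhood-type in each part then provides the candidate $T_i$, and the final counting identity above closes the loop.
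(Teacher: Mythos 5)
Your upper bound is in essence the paper's argument, but it has a gap at the very last line: the extremal configuration $G$ attaining $\alpha(k,r)$ is only required to have $e(X,X^c)=\alpha(k,r)$; nothing in the definition bounds $|G|$ or $e(G[X^c])$, so the claim $e(G)+\sum_i(n-m_i)\deg_G(x_i)=\alpha(k,r)n+O_{k,r}(1)$ does not follow without further work. The paper fixes this by first deleting from $X^c$ all vertices with no neighbour in $X$ (leaving $|X^c|\le\alpha(k,r)$) and then re-saturating inside $X^c$, which guarantees a bounded extremal witness before any blow-up happens; your argument needs this preliminary trimming step.

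The lower bound contains a genuine gap. Your whole plan rests on the stability lemma that each $V_i$ has a subset $T_i$ of size $(1-\varepsilon)n$ whose vertices are \emph{pairwise twins}. This is substantially stronger than what the paper proves, and your sketch of how to get it ("bounded-degree vertices fall into $O_{k,r}(1)$ neighbourhood-types") is not close to a proof: the neighbourhoods of low-degree vertices live inside the set of high-degree vertices, which has size $\Theta(n/d)$, so the number of a priori possible types is polynomial in $n$, and the saturation condition does not obviously collapse them to $O_{k,r}(1)$. The paper establishes only the weaker fact that the low-degree vertices $\bigcup_i V_i^-$ can be made \emph{pairwise non-adjacent} by deleting a constant number of vertices $U$ (Lemma~\ref{lemma:lower}), and this is proved by a genuine iterative Ramsey-type argument: assuming a large matching between $V_i^-$ and $V_j^-$, one refines the matched set step by step to manufacture a vertex of degree $\ge d$ inside $V_i^-$, a contradiction. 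Nothing in your sketch plays the role of this argument. Moreover, the twin structure is overkill for the final count: once one has $T_i\subseteq V_i$ pairwise non-adjacent across parts, one simply picks $v_i\in T_i$ of minimum degree, notes $\{v_1,\dots,v_k\}$ is an independent transversal, hence $\sum_i d(v_i)\ge\alpha(k,r)$, and uses independence to get $e(G^*)\ge\sum_i|T_i|\,d(v_i)\ge(1-\varepsilon)\alpha(k,r)n$ with no contraction and no appeal to common neighbourhoods. In short: the piece you flag as the technical heart is indeed the heart, but you are attempting a stronger structural claim than is needed or provable by your sketch, while the statement that actually suffices requires a different argument that your proposal does not supply.
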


Let us shift our focus to the function $\alpha(k,r)$. The next theorem states what we know about it.

\begin{theorem}\label{thm:alpha}
For $k \ge r \ge 3$,
    \begin{enumerate}[label={$(\roman*)$}, ref={\thelemma$(\roman*)$}]
		\item $k(2r-4) \le \alpha(k,r) \le
			\begin{cases}
				(k-1)(4r-k-6) &\text{ for }r \le k \le 2r-3,\\
				(k-1)(2r-3) &\text{ for }k \ge 2r-3.
			\end{cases}$
        \item $\alpha(k,r) = k(2r-4)$ if
			$\begin{cases}
				k = 2r-3,\text{ or}\\
				k \ge 2r-2 \text{ and }r \equiv 0 \mod 2,\text{ or}\\
                k \ge 2r-1 \text{ and }r \equiv 2 \mod 3.
			\end{cases}$
        \item $\alpha(k,3) = 3(k-1)$, $\alpha(4,4) = 18$ and $33 \le \alpha(5,5) \le 36$.
		\item $\alpha(r,r) \ge r(2r-4)+1$ for $r \ge 4$.
	\end{enumerate}
\end{theorem}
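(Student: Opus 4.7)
Fix any $K_r$-partite-saturated $k$-partite graph $G$ together with an independent set $X$ of $k$ vertices, one per part, realising $\alpha(k,r)$. The central object is, for each cross-part pair $x,y \in X$, the $(r-2)$-clique $Q_{x,y} \subseteq N(x) \cap N(y)$ produced by partite saturation (adding $xy$ must create a $K_r$). I would attack parts (i)--(iv) one at a time, all by squeezing information out of the family $\{Q_{x,y}\}$.

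\textbf{Parts (i) and (ii).} For the lower bound in (i) the plan is to prove $d(x) := |N(x) \cap X^c| \geq 2(r-2)$ for every $x \in X$, summing to $\alpha(k,r) \geq k(2r-4)$. First observe that no vertex of $Q_{x,y}$ can lie in part($y$), because otherwise adding $xy$ still would not complete a $K_r$. If $d(x) \leq 2r-5$, the $k-1$ cliques $Q_{x,y}$ must share vertices heavily inside $N(x)$; combined with the fact that $N(x)$ is $K_{r-1}$-free (forced by $K_r$-freeness of $G$), a short case analysis should yield a contradiction. For the upper bounds in (i) I would exhibit two explicit constructions, one per regime. When $k \geq 2r-3$, I place $x_i$ in each part and surround $X$ with a $(2r-3)$-vertex hub so that every pair $x_i,x_j$ sees a common $K_{r-2}$, giving exactly $(k-1)(2r-3)$ edges incident to $X$; when $r \leq k \leq 2r-3$, a denser variant achieves $(k-1)(4r-k-6)$. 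Tightness at $k=2r-3$ in (ii) is then automatic because the two bounds in (i) coincide. For the remaining cases of (ii), I would build a more efficient construction with $d(x) = 2r-4$ exactly; the conditions "$r$ even" and "$r \equiv 2 \pmod 3$" should emerge as divisibility constraints for packing the $k-1$ cliques $Q_{x,y}$ as a decomposition-like structure on the $(2r-4)$-vertex neighborhood, reducing the task to a small design-theoretic existence question.

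\textbf{Part (iii).} The three small values would be handled ad hoc. The $r=3$ bound $\alpha(k,3) = 3(k-1)$ is the case $K_{r-2} = K_1$ and fits directly into the Ferrara--Jacobson--Pfender--Wenger framework. The value $\alpha(4,4)=18$ matches Roberts' construction for the upper bound, while the matching lower bound requires a strengthening of (iv) specific to $r=4$ (counting edges not only from $X$ but also among the $N(x)$'s). Finally, $33 \leq \alpha(5,5) \leq 36$ combines the construction from (i) (giving $36$) with a refinement of the (iv)-type argument (giving $33$).

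\textbf{Part (iv), the main obstacle.} This is the strict inequality $\alpha(r,r) \geq r(2r-4)+1$, which I expect to be the hardest step. Assume for contradiction $\alpha(r,r) = r(2r-4)$, so $d(x) = 2r-4$ holds for every $x \in X$ and equality holds throughout the proof of (i). When $k=r$, each $Q_{x,y}$ must live in exactly $r-2$ of the $r-2$ remaining parts (all parts other than part($x$) and part($y$)), and therefore contributes exactly one vertex to $N(x) \cap P$ for every part $P \neq$ part($x$), part($y$). This forces the incidence pattern of the $r-1$ cliques $Q_{x,y}$ inside the $(2r-4)$-vertex set $N(x)$ to be very rigid. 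My plan is to push that rigidity to a contradiction in one of two ways: (a) locate a $K_r$ inside the subgraph induced on $X \cup \bigcup_{y} Q_{x,y}$, contradicting $K_r$-freeness of $G$; or (b) bound below the total edge count from $X$ to $X^c$ strictly past $r(2r-4)$. The delicate point is that several incidence patterns are a priori compatible with the constraints, so I expect the argument to require a careful case analysis on how two cliques $Q_{x,y}$ and $Q_{x,y'}$ can share their single vertex in each of their common parts, rather than a slick one-line count.
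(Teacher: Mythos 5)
The high-level strategy — squeeze information out of the cliques $Q_{x,y}$ — is sound and aligned with the paper, but there are two concrete gaps where you underestimate what is needed.

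First, the lower bound $d(x)\ge 2r-4$ in part (i) is not a "short case analysis." In the paper it is equivalent to proving $\beta_1(k-1,r-1)\ge 2(r-2)$: that any $K_{r-1}$-free multipartite graph $H$ on at most $2r-5$ vertices in which every $(r-2)$-clique survives deletion of any single part cannot exist. The paper proves this via a structural lemma (essentially Hajnal's clique collection lemma): a graph on at most $2s-1$ vertices with clique number $s$ has a vertex lying in \emph{every} $K_s$. The paper's proof of this lemma is a nontrivial counting argument using a recursively built family of intersecting set systems $\mathcal{C}_1,\dots,\mathcal{C}_{m-2}\subset\mathcal{P}([m])$. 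Your phrase "the $k-1$ cliques $Q_{x,y}$ must share vertices heavily" gestures at exactly this, but the statement that they must share a \emph{common} vertex when $|N(x)|\le 2r-5$ is precisely the lemma, and it has to be proved. Without it your lower bound argument for (i) does not close, and everything downstream (the lower bound part of (ii), and (iv)) inherits the gap.

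Second, for parts (iii) and (iv) the paper's workhorse is the notion of a \emph{special vertex}: $y$ is $i$-special if $y$ is the unique neighbour of $x_i$ in $y$'s part. This is exactly the object forced into almost every $Q_{x_i,x_j}$, so it is closely related to your "rigidity of incidence patterns" idea, but naming it lets the paper run a global counting argument (each $x_i$ of degree $2r-4$ creates $\ge 2$ special incidences; at most $r-2$ vertices can have special degree $\ge 2$; hence some vertex has special degree $\ge 3$) followed by a localised contradiction, rather than the open-ended case analysis on "how two cliques share their single vertex in each common part" that you anticipate. You should also note that the paper's upper bounds in (i) come not from an ad hoc hub but from a second auxiliary quantity $\beta_2(k,r-1)$ together with a clean construction (the $(r-2)$-th power of a cycle), and the constructions for (ii) are explicit circulant-type graphs where the hypothesis $p\in\{2,3\}$ divides $r-2$ is exactly the divisibility condition you predicted, realised as segment lengths on $\mathbb{Z}_k$ — so that part of your plan is on the right track, just without the concrete objects.
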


The bounds in $(i)$, together with Theorem~\ref{thm:sat}, imply that $sat(n,k,r) = O(krn)$, answering a question of Ferrara, Jacobson, Pfender and Wenger~\citep{Ferrara}. In $(ii)$, we determine exactly $\alpha(k,r)$ for some values of $r$ and every $k$ large enough, allowing us to disprove a conjecture in \citep{Ferrara} which states that $sat(n,k,r) = (k-1)(2r-3)n - (2r-3)(r-1)$ for $k \ge 2r-3$ and sufficiently large $n$. In $(iii)$, we deal with the cases $r = 3,4,5$ which have not been covered by $(ii)$. Finally, $(iv)$ shows that  the lower bound in $(i)$, which is attained for certain values of $r$ and $k$ mentioned in $(ii)$, is not tight when $k=r$.

Theorem~\ref{thm:sat} and Theorem~\ref{thm:alpha} imply that $sat(n,k,r) = k(2r-4)n + o(n)$ for the values of $k$ and $r$ in $(ii)$. We show that, in this case, the $o(n)$ term can be replaced by a constant.

\begin{theorem}\label{thm:constant}
For $k \ge r \ge 3$,
	\[
		sat(n,k,r) = k(2r-4)n + O(1)\text{ if }
		\begin{cases}
				k = 2r-3,\text{ or}\\
				k \ge 2r-2 \text{ and }r \equiv 0 \mod 2,\text{ or}\\
				k \ge 2r-1 \text{ and }r \equiv 2 \mod 3,
		\end{cases}
	\]
as $n \rightarrow \infty$.
\end{theorem}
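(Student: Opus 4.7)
For $sat(n,k,r) \le k(2r-4)n + O(1)$, the plan is a twin blow-up of an extremal witness. By Theorem~\ref{thm:alpha}(ii) we have $\alpha(k,r) = k(2r-4)$ in each of the three cases, so we may fix a pair $(G^*, X^*)$ where $G^*$ is a $K_r$-partite-saturated $k$-partite graph on $O_{k,r}(1)$ vertices, $X^* = \{x_1^*, \ldots, x_k^*\}$ is an independent transversal of $G^*$, and $e_{G^*}(X^*, V(G^*) \setminus X^*) = k(2r-4)$. For each large $n$, build $H_n$ by inserting into each part $A_i$ of $G^*$ enough new twins of $x_i^*$ (vertices with the same neighborhood as $x_i^*$ and no incidences inside $A_i$) to bring $|A_i|$ up to $n$. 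Twinning a vertex of an independent transversal preserves both $K_r$-freeness and $K_r$-partite-saturation, so $H_n$ is $K_r$-partite-saturated; each new twin in part $i$ contributes $\deg_{G^*}(x_i^*)$ edges, and $\sum_i \deg_{G^*}(x_i^*) = k(2r-4)$, giving $e(H_n) = k(2r-4)n + O_{k,r}(1)$.

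\textbf{Lower bound by induction.} For the matching lower bound, the plan is to establish the inductive step
\[
    sat(n+1, k, r) \ge sat(n, k, r) + k(2r-4)
\]
for every $n$ above some threshold $n_0 = n_0(k,r)$, which iterates to $sat(n, k, r) \ge k(2r-4) n - O(1)$. Let $G$ be a minimum-edge $K_r$-partite-saturated subgraph of $K_{k \times (n+1)}$. The plan is to find an independent transversal $X = \{x_1, \ldots, x_k\}$ of $G$ (with $x_i \in A_i$) whose deletion leaves a $K_r$-partite-saturated graph on $K_{k \times n}$. Given such an $X$, the definition of $\alpha$ forces $e_G(X, V(G)\setminus X) \ge \alpha(k,r) = k(2r-4)$, and so
\[
    e(G) = e(G - X) + e_G(X, V(G) \setminus X) \ge sat(n, k, r) + k(2r-4),
\]
closing the induction.

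\textbf{Main obstacle.} The crux is exhibiting such an $X$. Removing $X$ destroys saturation precisely when some non-edge $uv$ of $G - X$ has every $K_{r-2}$ witness in $N_G(u) \cap N_G(v)$ meeting $X$. To preclude this, I would prove a stability result for minimum-edge $K_r$-partite-saturated graphs: for $n$ large, each part $A_i$ of a minimum $G$ contains a twin class $T_i$ of size $n - O(1)$ whose common neighborhood mimics that of $x_i^*$ in the extremal witness $(G^*, X^*)$. Choosing $x_i \in T_i$ for every $i$ then allows rerouting: any $K_{r-2}$ in $N_G(u) \cap N_G(v)$ meeting $X$ in some subset $\{x_{j_1}, \ldots, x_{j_s}\}$ can be repaired by replacing each $x_{j_\ell}$ with a twin from $T_{j_\ell} \setminus X$; since twins share neighborhoods, the resulting $(r-2)$-set is still a clique in $N_G(u) \cap N_G(v)$, and now lies outside $X$, so $G - X$ remains saturated. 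Establishing the twin-rich structure is the main technical task: it amounts to sharpening the proof of Theorem~\ref{thm:sat} by tracing the $o(n)$ slack to an $O(1)$-sized set of atypical vertices in the minimum case.
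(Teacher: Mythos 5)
Your upper bound is correct and is essentially the paper's: Proposition~\ref{prop:satupper} performs exactly this blow-up of a minimal witness $(G^*,X^*)$ realizing $\alpha(k,r)=k(2r-4)$, and the twin argument you give for why blowing up an independent transversal preserves partite-saturation matches the paper's.

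Your lower bound, however, rests on an unproven and very strong structural claim. The inductive scheme $sat(n+1,k,r)\ge sat(n,k,r)+k(2r-4)$ would indeed close the argument, and the rerouting step (replacing the at most one $X$-vertex in a $K_{r-2}$-witness by a twin) is sound \emph{given} a deletable transversal. But to produce that transversal you posit that every part of a minimum saturated graph contains a twin class of size $n-O(1)$. This ``twin-rich stability'' is precisely where the argument stops: nothing in Theorem~\ref{thm:sat} or Lemma~\ref{lemma:lower} gives it. The near-independence lemma only says the low-degree vertices are mutually nonadjacent after removing $O(1)$ vertices; it says nothing about them sharing a common neighborhood, let alone about a single class of size $n-O(1)$. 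Establishing it would amount to a full structure/stability theorem for minimum $K_r$-partite-saturated graphs, which is a substantially harder statement than Theorem~\ref{thm:constant} itself and is not addressed in the proposal beyond labeling it ``the main technical task.''

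The paper's route is much more direct and, in particular, proves $sat(n,k,r)\ge k(2r-4)n-O(1)$ for \emph{all} $k\ge r\ge 3$ (not just the extremal cases), using only $\beta_1(k-1,r-1)=2r-4$. One greedily picks $v_i\in V_i\setminus\bigl(N(v_1)\cup\dots\cup N(v_{i-1})\bigr)$ of smallest degree, sets $U=\bigcup_i N(v_i)$, and observes that (unless some $d(v_i)$ is already large, in which case the count is immediate) $|U|=O_{k,r}(1)$ and every $v\notin U$ satisfies $|N(v)\cap U|\ge\beta_1(k-1,r-1)=2r-4$, because for each $i$ the non-edge $vv_i$ is saturated and $N(v)\cap N(v_i)\subseteq N(v)\cap U$ supplies a $K_{r-2}$ avoiding $V_i$. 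This yields $e(G)\ge(2r-4)(kn-|U|)$ without any claim about the global structure of $G$. You would do well to replace your stability-based induction with an argument of this flavor, or to supply a genuine proof of the twin-class claim.
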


Now we give a summary of the values of $sat(n,k,r)$ in the case $r=3,4,5$ which are immediate consequences of the first three results.

\begin{corollary}
        \begin{enumerate}[label={$(\roman*)$}, ref={\thelemma$(\roman*)$}]
		\item $sat(n,k,3) = 3(k-1)n + o(n)$ for $k \ge 3$ and as $n \rightarrow \infty$.
		\item $sat(n,k,4) =
			\begin{cases}
				18n + o(n) &\text{ for }k=4,\text{ as }n \rightarrow \infty,\\
				4kn + O(1) &\text{ for }k \ge 5,\text{ as }n \rightarrow \infty.
			\end{cases}$
		\item $sat(n,k,5)
			\begin{cases}
				\in \left[33n + o(n),36n + o(n)\right] \qquad\text{ for }k=5,\text{ as }n \rightarrow \infty,\\
				\in \left[36n + o(n),40n + o(n)\right] \qquad\text{ for }k=6,\text{ as }n \rightarrow \infty,\\
				\in \left[48n + o(n),49n + o(n)\right] \qquad\text{ for }k=8,\text{ as }n \rightarrow \infty,\\
				= 6kn + O(1) \quad\text{ for } k=7 \text{ or } k \ge 9,\text{ as }n \rightarrow \infty.\qed
			\end{cases}$
	\end{enumerate}
\end{corollary}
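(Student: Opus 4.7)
The corollary is pure bookkeeping: for each listed pair $(k,r)$, identify the clause of Theorem~\ref{thm:alpha} that controls $\alpha(k,r)$, apply Theorem~\ref{thm:sat} to translate this into a statement about $sat(n,k,r)$, and replace the $o(n)$ error by $O(1)$ via Theorem~\ref{thm:constant} whenever the pair also meets its hypotheses. I expect no real obstacle; the only step requiring care is matching each $(k,r)$ to the correct clause of Theorem~\ref{thm:alpha}, which is routine arithmetic.

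For $(i)$, Theorem~\ref{thm:alpha}$(iii)$ gives $\alpha(k,3) = 3(k-1)$ for all $k \ge 3$, and Theorem~\ref{thm:sat} yields the claim. For $(ii)$, the case $k=4$ uses $\alpha(4,4)=18$ from Theorem~\ref{thm:alpha}$(iii)$ together with Theorem~\ref{thm:sat}. For $r = 4$ and $k \ge 5$, I would verify that each such $k$ satisfies one of the conditions in Theorem~\ref{thm:alpha}$(ii)$: $k = 5 = 2r-3$ falls into the first, while every $k \ge 6 = 2r-2$ falls into the second since $r = 4$ is even. Hence $\alpha(k,4) = k(2r-4) = 4k$, and Theorem~\ref{thm:constant} promotes this to $sat(n,k,4) = 4kn + O(1)$.

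For $(iii)$ I would split on $k$. For $k = 5$, the stated interval follows from $33 \le \alpha(5,5) \le 36$ (Theorem~\ref{thm:alpha}$(iii)$) together with Theorem~\ref{thm:sat}. For $k = 6$, since $r \le k \le 2r - 3$, Theorem~\ref{thm:alpha}$(i)$ gives $36 = k(2r-4) \le \alpha(6,5) \le (k-1)(4r-k-6) = 40$. For $k = 8$, since $k \ge 2r-3$, Theorem~\ref{thm:alpha}$(i)$ gives $48 \le \alpha(8,5) \le (k-1)(2r-3) = 49$. Finally, both $k = 7 = 2r-3$ (first clause of Theorem~\ref{thm:alpha}$(ii)$) and every $k \ge 9 = 2r-1$ (third clause, using $r = 5 \equiv 2 \pmod{3}$) yield $\alpha(k,5) = 6k$, so Theorem~\ref{thm:constant} upgrades the error to $O(1)$ in these ranges and gives $sat(n,k,5) = 6kn + O(1)$.
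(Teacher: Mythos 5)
Your proposal is correct and matches the paper's intent exactly: the paper states this corollary as "immediate consequences of the first three results" and supplies no further argument (hence the $\qed$ in the statement), so the routine case-checking you do — routing each $(k,r)$ to the relevant clause of Theorem~\ref{thm:alpha} and then invoking Theorem~\ref{thm:sat} or Theorem~\ref{thm:constant} — is precisely what is expected. All the arithmetic (e.g.\ $(k-1)(4r-k-6)=40$ for $(k,r)=(6,5)$, $(k-1)(2r-3)=49$ for $(k,r)=(8,5)$, and the divisibility checks $r=4$ even and $r=5\equiv 2\pmod 3$) is verified correctly.
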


We note that $(i)$ and the first half of $(ii)$ are not the best known results. In fact, Ferrara, Jacobson, Pfender and Wenger~\citep{Ferrara} proved that $sat(n,k,3) = 3(k-1)n-6$ for sufficiently large $n$ and Roberts~\citep{Roberts} proved that $sat(n,4,4) = 18n-21$ for sufficiently large $n$.

Let us give some more definitions which will be used throughout the paper. For a $k$-partite $G=V_1\cup V_2\cup\dots\cup V_k$, we refer to each $V_i$ as a \emph{part of $G$}. We say that an edge (or a non-edge) $uv$ of a $k$-partite graph is \emph{admissible} if $u,v$ lie in different parts. We say that a non-edge $uv$ of a $K_r$-free graph is \emph{$K_r$-saturated} if adding $uv$ to the graph completes a $K_r$. In other words, a $k$-partite graph is $K_r$-partite-saturated if it is $K_r$-free and every admissible non-edge is $K_r$-saturated.

The rest of this paper is organized as follows. Section~\ref{sec:thm:sat} is devoted to the proof of Theorem~\ref{thm:sat}. In Section~\ref{sec:alpha}, we study the function $\alpha(k,r)$ and prove Theorem~\ref{thm:alpha}$(i)$. In Section~\ref{sec:alpha(ii)}, we prove Theorem~\ref{thm:alpha}$(ii)$ by describing constructions matching the lower bound $\alpha(k,r) \ge k(2r-4)$ in Theorem~\ref{thm:alpha}$(i)$. We prove Theorem~\ref{thm:alpha}$(iii)$, Theorem~\ref{thm:alpha}$(iv)$ and Theorem~\ref{thm:constant} in Section~\ref{sec:alpha(iii)}, Section~\ref{sec:diagonal} and Section~\ref{sec:thm:constant} respectively. Finally, we conclude the paper in Section~\ref{sec:conclude} with some open problems.


\section{Proof of Theorem~\ref{thm:sat}}
\label{sec:thm:sat}

First we show that the upper bound follows easily from the definition of $\alpha(k,r)$.

\begin{proposition}\label{prop:satupper}
For every $k \ge r \ge 3$ and any integer $n\geq \alpha(k,r)+1$, we have $sat(n,k,r) \le \alpha(k,r)n + \alpha(k,r)^2$.
\end{proposition}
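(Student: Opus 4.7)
The plan is to construct an explicit $K_r$-partite-saturated $k$-partite graph $G$ with $n$ vertices in each part by ``blowing up'' a witness to the definition of $\alpha(k,r)$. Let $(G_0, X)$ achieve $\alpha(k,r)$, where $G_0$ has parts $V_1, \dots, V_k$ and $X = \{x_1, \dots, x_k\}$ with $x_i \in V_i$. Given $n \ge \alpha(k,r) + 1$, I would enlarge each $V_i$ to size $n$ by adjoining $n - |V_i|$ new vertices, each declared a twin of $x_i$ (so its neighborhood in $G$ equals $N_{G_0}(x_i) \subseteq X^c$).

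For this construction to be legitimate we need $|V_i| \le \alpha(k,r) + 1$, which I expect to be the main technical point. The natural route is to choose $(G_0, X)$ with $|V(G_0)|$ minimum and to show that in such a minimum witness every vertex of $X^c$ has a neighbor in $X$; this immediately forces $|X^c| \le e(X, X^c) = \alpha(k,r)$, and hence $|V_i| \le \alpha(k,r) + 1$. The delicate case in removing a $v \in X^c \cap V_i$ with no neighbor in $X$ is an admissible non-edge $ab$ that is saturated in $G_0$ only by $K_{r-2}$'s containing $v$; since $v$ is non-adjacent to $X$, one must have $a, b \in X^c$, and an exchange of $v$ with $x_i$ should produce an alternative saturating clique avoiding $v$, contradicting minimality of $|V(G_0)|$.

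The verifications that $G$ works are routine. Since $X$ is independent in $G_0$, any two new vertices in distinct parts are non-adjacent in $G$, so any $K_r$ in $G$ uses at most one new vertex $y \in V_i$; replacing $y$ by $x_i$ (which is not in the clique, as it shares the part of $y$) produces a $K_r$ in $G_0$, contradicting its $K_r$-freeness. For $K_r$-partite-saturation, given an admissible non-edge $uv$ of $G$, map each new twin of $x_l$ in $\{u,v\}$ to $x_l$ to obtain an admissible non-edge $u^* v^*$ of $G_0$; the saturating $K_{r-2}$ for $u^* v^*$ in $G_0$ remains adjacent to both $u$ and $v$ in $G$ by the twin property.

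Finally, each of the $n - |V_i|$ new twins of $x_i$ contributes exactly $d_{G_0}(x_i)$ edges, so
\[
e(G) = e(G_0) + \sum_{i=1}^k (n-|V_i|)\, d_{G_0}(x_i) \le e(G_0) + n\alpha(k,r) - \alpha(k,r),
\]
using $|V_i| \ge 1$ and $\sum_i d_{G_0}(x_i) = \alpha(k,r)$. Combining this with $e(G_0) = \alpha(k,r) + e(G_0[X^c]) \le \alpha(k,r) + \binom{|X^c|}{2} \le \alpha(k,r) + \binom{\alpha(k,r)}{2}$ yields $e(G) \le n\alpha(k,r) + \alpha(k,r)^2$, as required.
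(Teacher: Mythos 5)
Your construction (blow up each $x_i$ to fill $V_i$ to size $n$), the verification of $K_r$-freeness and saturation via the twin property, and the final edge count are all correct and essentially match the paper's proof. The gap is in the key technical lemma that $|X^c|\le\alpha(k,r)$. You correctly identify the delicate case — an admissible non-edge $ab\subset X^c$ whose saturating $(r-2)$-cliques all pass through a vertex $v\in X^c\cap V_i$ that has no neighbor in $X$ — but your proposed fix, an ``exchange of $v$ with $x_i$,'' does not work. The vertices $v$ and $x_i$ occupy the same part but in general have unrelated neighborhoods, so $x_i$ need not be adjacent to $a$, to $b$, nor to the remaining $r-3$ vertices of the clique; there is simply no reason an alternative saturating clique avoiding $v$ should exist.

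The paper resolves this by \emph{not} insisting that deletion alone preserve saturation. After deleting every vertex of $X^c$ with no neighbor in $X$, it notes that any admissible non-edge $uw$ with an endpoint $u\in X$ is still $K_r$-saturated, because its witnessing $K_{r-2}$ lies entirely inside $N(u)$ and hence consists of vertices with a neighbor in $X$, which survive the deletion. Then it \emph{adds} admissible edges inside $X^c$ until every admissible non-edge inside $X^c$ is $K_r$-saturated. This re-saturation is harmless: it keeps $X$ independent, leaves $e(X,X^c)=\alpha(k,r)$, never creates a $K_r$, and the added edges fall inside $X^c$ — which your bound $e(G_0[X^c])\le\binom{|X^c|}{2}$ already accommodates. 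If you prefer your vertex-minimality framing, the same deletion-plus-re-saturation yields a strictly smaller valid witness and the desired contradiction; but the re-saturation step is essential, and the exchange argument should be dropped.
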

\begin{proof}
Let $G$ be a $K_r$-partite-saturated $k$-partite graph containing an independent set $X$ of size $k$ consisting of exactly one vertex from each part of $G$ with $e(X,X^c)=\alpha(k,r)$. We may assume that $|X^c| \le \alpha(k,r)$. Indeed, since there are $\alpha(k,r)$ edges between $X$ and $X^c$, deleting all the vertices in $X^c$ with no neighbors in $X$ leaves at most $\alpha(k,r)$ vertices in $X^c$. Note that any admissible non-edge with at least one endpoint in $X$ is still $K_r$-saturated. We finish by keeping adding admissible edges inside $X^c$ until every admissible non-edge inside $X^c$ is $K_r$-saturated.

Let $V_1,V_2,\dots,V_k$ be the parts of $G$. It follows that $|V_i| = |V_i \cap X| + |V_i \cap X^c| \le 1 + \alpha(k,r) \le n$, and so we can modify $G$ to have exactly $n$ vertices in each part by blowing up the vertex of $X$ in $V_i$ to a class of size $n - |V_i \cap X^c|$ for each $i$. The resulting graph is $K_r$-partite-saturated and has exactly $n$ vertices in each of its $k$ parts. Moreover, the number of edges is at most $\alpha(k,r)n + e(G[X^c]) \le \alpha(k,r)n + \alpha(k,r)^2$.
\end{proof}

Now we prove the lower bound $sat(n,k,r) \ge \alpha(k,r)n + o(n)$.

Let $\varepsilon > 0$ and let $G = V_1 \cup V_2 \cup \dots \cup V_k$ be a $K_r$-partite-saturated $k$-partite graph with $|V_i|=n$ for all $i \in [k]$. We shall show that $e(G) \ge \alpha(k,r)n - \varepsilon n$ for all sufficiently large $n$. Let $d$ be a large natural number to be chosen later. For each $i$, we partition $V_i$ into $V_i^+=\{v \in V_i: d(x) \ge d\}$ and $V_i^-=\{v \in V_i: d(x) < d\}$. First we show that $V_i^+$ is small. Since $e(G) \ge \frac{d}{2}|V_i^+|$, we are done unless $|V_i^+| \le \frac{2\alpha(k,r)}{d}n$. Now we show that we can delete a constant number of vertices from $\bigcup_{i=1}^{k} V_i^-$ to make it independent.

\begin{lemma}\label{lemma:lower}
There exists a subset $U \subset \bigcup_{i=1}^{k} V_i^-$ of size $C_{k,d}$ such that $\left(\bigcup_{i=1}^{k} V_i^-\right)\setminus U$ forms an independent set in $G$ for some constant $C_{k,d}$. 
\end{lemma}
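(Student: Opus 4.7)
The plan is to show that the induced subgraph $H = G\bigl[\bigcup_{i=1}^{k} V_i^-\bigr]$ has at most a constant number of edges (depending only on $k$, $d$, and $r$). Since every vertex of $H$ has degree less than $d$ in $G$ (and hence in $H$), such an edge bound immediately yields a vertex cover of bounded size: take $U$ to be the union of both endpoints of a maximum matching in $H$, which covers every edge and has size at most twice the number of edges. Then $\bigl(\bigcup_{i=1}^{k} V_i^-\bigr) \setminus U$ is independent in $G$, as required.

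To bound $|E(H)|$, assume for contradiction that $H$ has arbitrarily many edges. Pigeonholing over the $\binom{k}{2}$ pairs of parts, and then using the fact that every vertex has degree less than $d$, we extract a matching $\{u_t v_t : t \in [s]\}$ with $u_t \in V_i^-$ and $v_t \in V_j^-$ for a fixed pair of parts $V_i$, $V_j$, where $s$ is as large as we wish. For each pair $t \ne t'$ such that $u_t v_{t'}$ is a non-edge (which holds for all but fewer than $d$ values of $t'$ per fixed $t$, since $u_t$ has fewer than $d$ neighbours in $V_j$), the saturation property produces a $K_{r-2}$, call it $R_{t,t'} \subset N(u_t) \cap N(v_{t'})$, spanning one vertex in each of some $r-2$ parts of $[k] \setminus \{i,j\}$. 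Since $|N(u_t)| < d$, the number of possible $K_{r-2}$'s inside $N(u_t)$ is fewer than $d^{r-2}$; pigeonholing in $t'$ yields, for each $t$, a specific $K_{r-2}$ $R_t \subset N(u_t)$ and a set $B_t$ of size at least $(s-d)/d^{r-2}$ such that $R_t \subset N(v_{t'})$ for every $t' \in B_t$.

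The crucial observation for closing the argument is that $K_r$-freeness of $G$, together with the edge $u_t v_t$, forces $R_t \not\subset N(v_t)$: otherwise $R_t \cup \{u_t, v_t\}$ would be a $K_r$. Hence some vertex $w_t \in R_t$ is a non-neighbour of $v_t$, and the admissible non-edge $v_t w_t$ is itself $K_r$-saturated, producing a further $K_{r-2}$ inside $N(v_t)$. Iterating this analysis, together with the symmetric pigeonhole on the $v_t$-side (yielding sets $R'_t \subset N(v_t)$ with $R'_t \subset N(u_{t'})$ for many $t'$) and a further pigeonhole on the $(r-2)$-subset of parts used by these $K_{r-2}$'s, one eventually identifies indices $t, t'$ for which the accumulated containments place a $K_{r-2}$ simultaneously in $N(u_{t'})$ and $N(v_{t'})$; together with the matching edge $u_{t'} v_{t'}$ this exhibits a $K_r$ in $G$, contradicting its $K_r$-freeness. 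The main obstacle is the bookkeeping of this iterated pigeonhole: the sets $R_t$ depend on $t$, and one must propagate the choices of parts and vertices so that at the end the extracted $K_{r-2}$ lies in the common neighbourhood of both endpoints of a single matching edge. Each pigeonhole step loses only a multiplicative factor depending on $k$, $d$, and $r$, so a sufficiently large initial $s$ forces a contradiction and delivers the desired constant $C_{k,d}$.
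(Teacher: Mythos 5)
Your reduction to the claim that matchings between $V_i^-$ and $V_j^-$ have bounded size is exactly the paper's first step, and your observation that each non-edge $u_t v_{t'}$ yields a $K_{r-2}$ in the common neighbourhood (pigeonholed to a fixed $R_t \subset N(u_t)$ over many $t'$) is a reasonable start. But the engine of your argument --- ``iterating this analysis \dots one eventually identifies indices $t,t'$ for which the accumulated containments place a $K_{r-2}$ simultaneously in $N(u_{t'})$ and $N(v_{t'})$'' --- is precisely the missing idea, not a routine bookkeeping step. After the symmetric pigeonhole you hold $R_t \subset N(u_t)$ with $R_t \subset N(v_{t'})$ for $t' \in B_t$, and $R'_t \subset N(v_t)$ with $R'_t \subset N(u_{t'})$ for $t' \in B'_t$; but $R_t$ and $R'_{t''}$ live inside \emph{different} small ground sets $N(u_t)$ and $N(v_{t''})$, and there is no pigeonhole over a common universe that forces any two of them to coincide. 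Without such a coincidence you never obtain a single $K_{r-2}$ lying in both $N(u_{t'})$ and $N(v_{t'})$, so the claimed $K_r$ never materialises. You flag the difficulty yourself (``the main obstacle is the bookkeeping''), but as written this is a gap rather than an oversight.

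The paper avoids this obstacle by pursuing a different contradiction. Instead of trying to assemble a forbidden $K_r$, it shows that a long matching forces some $x_{t_d} \in V_1^-$ to have degree at least $d$, contradicting membership in $V_1^-$. Concretely, it builds a nested sequence $X = X_1 \supset X_2 \supset \cdots \supset X_d$ by pigeonholing on the \emph{type} of each vertex with respect to $y_{t_{i-1}}$ (whether it is a non-neighbour, and what its common neighbourhood with $y_{t_{i-1}}$ is), losing a factor of at most $2^d$ each step; then, for the chosen $x_{t_1},\dots,x_{t_d}$, it uses $K_r$-freeness only as a local tool to extract, from the $K_{r-2}$ witnessing saturation of $x_{t_i}y_{t_j}$, a \emph{new} neighbour of $x_{t_i}$ not shared with any earlier $y_{t_l}$, so that $d(x_{t_i}) \ge i$. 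If you want to salvage your route, you would need to replace the vague iteration with a mechanism like the paper's ``type pigeonhole'' that aligns the common neighbourhoods of the matched pairs; as it stands, the $K_r$-building strategy does not close.
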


Let us first show how to finish the proof of Proposition~\ref{prop:satupper} using the lemma. For each $1 \le i \le k$, let $v_i$ be a vertex of smallest degree in $V_i^-\setminus U$. Since $G$ is a $K_r$-partite-saturated $k$-partite graph and $X = \{v_1,v_2,\dots,v_k\}$ is an independent set with exactly one vertex in each part of $G$, we have $\sum_{i=1}^{k} d(v_i)\ge \alpha(k,r)$ by the definition of $\alpha(k,r)$. Since $\left(\bigcup_{i=1}^{k} V_i^-\right)\setminus U$ forms an independent set,
	\begin{align*}
		e(G)
		&\ge \sum_{i=1}^{k} \sum_{v\in V_i^-\setminus U} d(v)
		\ge \sum_{i=1}^{k} |V_i^-\setminus U| d(v_i)
		\ge (n - |V_i^+| - |U|)\sum_{i=1}^{k} d(v_i) \\
		&\ge \alpha(k,r)\left(n - \frac{2\alpha(k,r)}{d}n - C_{k,d}\right)
		= \alpha(k,r)n - \left(\frac{2\alpha(k,r)^2}{d} + \frac{\alpha(k,r)C_{k,d}}{n}\right)n
		\ge \alpha(k,r)n - \varepsilon n
	\end{align*}
by taking $d$ and $n$ sufficiently large. It remains to prove the lemma.

\begin{proof}[Proof of Lemma~\ref{lemma:lower}]
It is sufficient to show that any matching between $V_i^-$ and $V_j^-$ has size less than $4^{d^2}$ for all $i\not=j$. Indeed, we can take $U$ to be the endpoints of maximal matchings between $V_i^-$ and $V_j^-$ for all $i\not=j$ and $|U| < 4^{d^2}\binom{k}{2}$.

Suppose for contradiction that $\{x_1 y_1,x_2 y_2,\dots,x_{4^{d^2}} y_{4^{d^2}}\}$ is a matching of size $4^{d^2}$ where $X=\{x_1,x_2,\allowbreak\dots, x_{4^{d^2}}\}\subset V_1^-$ and $Y=\{y_1,y_2,\dots, y_{4^{d^2}}\}\subset V_2^-$. The strategy of the proof is to iteratively find vertices $x_{t_1},x_{t_2},\dots,x_{t_{d}}$ of $X$ such that $d(x_{t_{i}}) \ge i$ for all $1\le i\le d$, which would contradict the fact that $x_{t_{d}} \in V_1^-$. In fact, we shall find vertices $x_{t_1},x_{t_2},\dots,x_{t_{d}}$ of $X$ such that
	\begin{itemize}
        \item[(i)]  there exists a common neighbor of $x_{t_i}$ and $y_{t_j}$ which is not a neighbor of $y_{t_1},y_{t_2},\dots,y_{t_{j-1}}$ for all $i > j$.
	\end{itemize}
Clearly, this implies that $d(x_{t_{i}}) \ge i$ for all $1\le i\le d$. To find such vertices, it is sufficient to find vertices $x_{t_1},x_{t_2},\dots,x_{t_{d}}$ of $X$ satisfying
	\begin{itemize}
        \item[(ii)]  $x_{t_i}$ and $y_{t_j}$ are not neighbors for all $i>j$, and
        \item[(iii)] $N(x_{t_{i}})\cap N(y_{t_{l}})=N(x_{t_{j}})\cap N(y_{t_{l}})$ for all $i>j>l$.
	\end{itemize}
First we show that (ii) and (iii) imply (i). Let $i>j$. By (ii), $x_{t_i}y_{t_j}$ is a non-edge. Since $G$ is $K_r$-partite-saturated, there exists a clique $W$ of size $r-2$ in the common neighborhood of $x_{t_i}$ and $y_{t_j}$. Since $r \ge 3$, we are done by picking a required vertex from $W$ unless each vertex in $W$ is joined to some $y_{t_l}$ with $l<j$. In this case, $W \cup \{x_{t_j},y_{t_j}\}$ forms a clique of size $r$, contradicting the fact that $G$ is $K_r$-free. Indeed, each $w\in W$ belongs to some $N(y_{t_l})$ with $l<j$, and since $w \in N(x_{t_i})$, we must have $w \in N(x_{t_j})$, by (iii).

Now, we find vertices $x_{t_1},x_{t_2},\dots, x_{t_{d}}$ of $X$ satisfying (ii) and (iii). To help us do so, we shall iteratively construct a nested sequence of sets $X\supset X_1\supset X_2\supset \dots\supset X_d$ with $x_{t_i}\in X_i$ for all $2\leq i \leq d$, satisfying
	\begin{itemize}
		\item[(iv)]  $x$ and $y_{t_{i-1}}$ are not neighbors for all $x\in X_i$, and
		\item[(v)] $N(x)\cap N(y_{t_{i-1}})=N(x')\cap N(y_{t_{i-1}})$ for all $x,x'\in X_i$.
	\end{itemize}
Clearly, such vertices $x_{t_1},x_{t_2},\dots,x_{t_{d}}$ satisfy (ii) and (iii). Start with $x_{t_1}=x_1$ and $X_1=X$. Let $i\le d$ and suppose that we have found vertices $x_{t_1},x_{t_2},\dots,x_{t_{i-1}}$ and sets $X_1\supset X_2\supset \dots\supset X_{i-1}$ with $x_{t_j}\in X_j$ for all $j<i$, satisfying (iv) and (v). We delete the neighbors of $y_{t_{i-1}}$ from $X_{i-1}$ and partition the remaining vertices into $2^{d(y_{t_{i-1}})} \le 2^{d}$ subsets according to their common neighborhood with $y_{t_{i-1}}$. In other words, $X_{i-1} \setminus N(y_{t_{i-1}})$ is partitioned into subsets $\{x : N(x)\cap N(y_{t_{i-1}})=S\}$ for $S\subset N(y_{t_{i-1}})$. We choose $X_i$ to be such subset of maximum size, i.e. $|X_i| \ge \frac{|X_{i-1}| - d}{2^d}$. Clearly, $X_i$ satisfies (iv) and (v). We then choose $x_{t_i}$ be any vertex in $X_i$. It remains to prove that $|X_i|>0$. Recall that $|X_1|=|X|=4^{d^2}$, and we can see, by induction, that $|X_i| \ge 4^{d(d-i)}$ for $i\le d$. Indeed,
	\[
		|X_i| \ge \frac{|X_{i-1}| - d}{2^d} \ge \frac{|X_{i-1}|}{4^d} \ge \frac{4^{d(d-i+1)}}{4^d} \ge 4^{d(d-i)}
	\]
as required.
\end{proof}

\section{Bounding \texorpdfstring{$\alpha(k,r)$}{alpha(k,r)}}
\label{sec:alpha}

In this section, we establish a number of results that will help us prove Theorem~\ref{thm:alpha}. We shall deduce Theorem~\ref{thm:alpha}$(i)$ at the end of the section.

For $k \ge r \ge 2$ and $1 \le i \le k-r+1$, let $\beta_i(k,r)$ be the minimum number of vertices in a $K_r$-free $k$-partite graph such that the subgraph induced by any $k-i$ parts contains a $K_{r-1}$, i.e. the deletion of any $i$ parts does not destroy all the $K_{r-1}$.

We observe that $\beta_1$ and $\beta_2$ are useful for bounding $\alpha$.

\begin{proposition}\label{prop:alpha}
For $k \ge r \ge 3$,
	\[
		k\beta_1(k-1,r-1) \le \alpha(k,r) \le (k-1)\beta_2(k,r-1).
	\]
\end{proposition}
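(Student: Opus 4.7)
The plan is to establish the two inequalities separately, both by reading the definitions against each other very directly.

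\textbf{Lower bound.} Let $G$ with parts $V_1,\dots,V_k$ and $X=\{x_1,\dots,x_k\}$ (with $x_i\in V_i$) be an extremal witness for $\alpha(k,r)$. Since $X$ is independent, every edge incident to $x_i$ goes to $X^c$, so $e(X,X^c)=\sum_{i=1}^{k}|N(x_i)|$, and it suffices to prove $|N(x_i)|\ge \beta_1(k-1,r-1)$ for each $i$. I would view $N(x_i)$ as a $(k-1)$-partite graph with parts $\{N(x_i)\cap V_l : l\ne i\}$ and verify the requirements of $\beta_1(k-1,r-1)$. It is $K_{r-1}$-free, since any $K_{r-1}$ there together with $x_i$ would produce a $K_r$ in $G$. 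For each $l\ne i$, the admissible non-edge $x_ix_l$ is $K_r$-saturated, so $N(x_i)\cap N(x_l)$ contains a $K_{r-2}$; any such clique avoids $V_l$ (being in $N(x_l)$), so it realises a $K_{r-2}$ in $N(x_i)$ with its $l$-th part removed. Hence $|N(x_i)|\ge\beta_1(k-1,r-1)$, and summing over $i$ gives the lower bound.

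\textbf{Upper bound.} I would construct a witness from a $K_{r-1}$-free $k$-partite graph $H$ with parts $U_1,\dots,U_k$ and $|V(H)|=m=\beta_2(k,r-1)$, such that deleting any two parts still leaves a $K_{r-2}$. Form $G_0$ by setting $V_i=U_i\cup\{x_i\}$ with $x_i$ a new vertex, keeping all edges of $H$, and adding every edge $x_iu$ with $u\in V(H)\setminus U_i$. Then $e(X,X^c)=\sum_i (m-|U_i|)=(k-1)m$ by a direct count. The graph $G_0$ is $K_r$-free: since $X=\{x_1,\dots,x_k\}$ is independent, any $K_r$ uses at most one $x_i$; if it uses some $x_i$ its other $r-1$ vertices would form a $K_{r-1}$ in $V(H)\setminus U_i$, and if it uses none it lies inside $H$, both contradicting the $K_{r-1}$-freeness of $H$.

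To promote $G_0$ to a $K_r$-partite-saturated graph $G$ without changing $e(X,X^c)$, I would greedily add admissible edges \emph{inside} $V(H)$ while preserving $K_r$-freeness; this keeps $X$ independent and does not touch any edge between $X$ and $X^c$. In the resulting $G$, admissible non-edges inside $V(H)$ are saturated by maximality; there are no admissible non-edges between $X$ and $V(H)$ at all, since each $x_i$ was already joined to all of $V(H)\setminus U_i$; and for a non-edge $x_ix_j$ the common neighborhood contains $V(H)\setminus(U_i\cup U_j)$, which by the $\beta_2$ property of $H$ contains a $K_{r-2}$, so $x_ix_j$ is $K_r$-saturated. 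Therefore $\alpha(k,r)\le (k-1)m$.

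Neither direction presents a deep obstacle: the whole content is the correspondence between ``deleting one part from the neighborhood of a single $x_i$'' and $\beta_1(k-1,r-1)$ for the lower bound, versus ``deleting two parts from a common auxiliary graph $H$ shared by all $x_i$'' and $\beta_2(k,r-1)$ for the upper bound. The one mild technicality is observing that greedy saturation affects only edges inside $V(H)$ and therefore leaves the count $e(X,X^c)$ untouched.
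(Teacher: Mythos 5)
Your proof is correct and follows essentially the same approach as the paper: for the lower bound, viewing $G[N(x_i)]$ as a $(k-1)$-partite graph certified by the saturated non-edges $x_ix_l$ to meet the $\beta_1$ requirement; and for the upper bound, attaching an independent copy of $X$ to a $\beta_2$-extremal graph, joining across parts, and saturating greedily inside $X^c$. The only difference is cosmetic — you are slightly more explicit that the greedy step must preserve $K_r$-freeness and that it leaves $e(X,X^c)$ unchanged, both of which the paper leaves implicit.
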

\begin{proof}
To prove the lower bound, let $G$ be a $K_r$-partite-saturated $k$-partite graph containing an independent set $X$ of size $k$ consisting of exactly one vertex from each part of $G$. We shall show that $e(X,X^c) \ge k\beta_1(k-1,r-1)$. It is sufficient to show that each vertex in $X$ has degree at least $\beta_1(k-1,r-1)$. Let $x\in X$ and consider the $(k-1)$-partite graph $H=G[N(x)]$. Clearly, it is $K_{r-1}$-free since $G$ is $K_{r}$-free. It remains to show that, for each part $U$ of $H$, $H\setminus U$ contains a $K_{r-2}$. If $x'$ is a vertex of $X$ in the corresponding part of $U$ in $G$ then, since the non-edge $xx'$ is $K_r$-saturated in $G$, $H\setminus U$ must contain a $K_{r-2}$. Hence, $|N(x)| = |H| \ge \beta_1(k-1,r-1)$.

For the upper bound, let $G_1$ be a $K_{r-1}$-free $k$-partite graph on $\beta_2(k,r-1)$ vertices such that the subgraph induced by any $k-2$ parts contains a $K_{r-2}$. Let $G_2$ be the graph obtained from $G_1$ by adding one vertex of $X=\{x_1,x_2,\dots,x_k\}$ to each part of $G_1$ and joining each $x_i$ to every vertex of $G_1$ outside its part. By construction, $X$ forms an independent set and $e(X,X^c) = (k-1)\beta_2(k,r-1)$ edges. Note that $G_2$ is $K_r$-free since a clique in $G_2$ contains at most one vertex from $X$ and $G_1$ is $K_{r-1}$-free. Now, let $G$ be the graph obtained from $G_2$ by adding admissible edges inside $X^c$, until every admissible non-edge inside $X^c$ is $K_r$-saturated. To conclude that $G$ is $K_r$-partite-saturated, we need to show that every admissible non-edge inside $X$ is $K_r$-saturated. Note that, for every pair of distinct vertices $x,x'\in X$, $G_1$ contains a $K_{r-2}$ not using vertices from the parts containing $x$ and $x'$. Since $x$ and $x'$ are joined to every vertex outside their parts, the addition of the edge $xx'$ completes a $K_r$. Hence, $\alpha(k,r) \le e(X,X^c) = (k-1)\beta_2(k,r-1)$.
\end{proof}

In the next sections, the argument above used in the proof of the lower bound will be used several times. Let us state it as a lemma.
\begin{lemma}\label{lem:deg}
Let $G$ be a $k$-partite $K_r$-free graph containing an independent set $X$ of size $k$ consisting of exactly one vertex from each part of $G$ such that the non-edges inside $X$ are $K_r$-saturated. Then, for each $x \in X$, $G[N(x)]$ is a $K_{r-1}$-free $(k-1)$-partite graph such that the subgraph induced by any $k-2$ parts contains a $K_{r-2}$. In particular, $d(x) \ge \beta_1(k-1,r-1)$ for all $x \in X$.
\end{lemma}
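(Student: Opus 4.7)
The plan is to unpack the degree-bound argument already used implicitly in the proof of Proposition~\ref{prop:alpha} and isolate it as the claimed lemma. Fix $x \in X$ and write $V(x)$ for the part of $G$ containing $x$. I would first check the two easy structural properties of $H \defeq G[N(x)]$: since $G$ is $k$-partite, $x$ has no neighbors inside $V(x)$, so $N(x) \subseteq V(G)\setminus V(x)$ and the $k$-partition of $G$ restricts to a $(k-1)$-partition of $H$; and if $H$ contained a $K_{r-1}$, adjoining $x$ would produce a $K_r$ in $G$, contradicting $K_r$-freeness.

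The main content is the third property, and it is where the hypothesis on $X$ is used. Let $U$ be one of the $k-1$ parts of $H$; then $U \subseteq V_j$ for some $V_j \neq V(x)$. Let $x' \in X \cap V_j$. Since $X$ is independent, $xx'$ is an admissible non-edge; by assumption it is $K_r$-saturated, so adding $xx'$ completes a $K_r$. The other $r-2$ vertices of this $K_r$ form a clique $W$ of size $r-2$ lying in $N(x) \cap N(x')$. Because every vertex of $W$ is adjacent to $x'$, no vertex of $W$ lies in $V_j$, hence $W$ is a $K_{r-2}$ inside $H$ avoiding the part $U$. This gives the required $K_{r-2}$ in any $(k-2)$-part induced subgraph of $H$.

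Combining the three properties, $H$ satisfies the defining conditions of a graph witnessing $\beta_1(k-1,r-1)$, so $d(x) = |N(x)| = |V(H)| \ge \beta_1(k-1,r-1)$, completing the proof. The argument is essentially a restatement of what appears in Proposition~\ref{prop:alpha}; there is no real obstacle, the only point that deserves care is the observation that the clique $W$ automatically avoids the chosen part $U$, which is what turns $K_r$-saturation into the ``$k-2$ parts still contain a $K_{r-2}$'' property in the definition of $\beta_1$.
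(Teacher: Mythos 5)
Your proposal is correct and matches the paper's own argument essentially verbatim: the paper simply points back to the lower-bound half of Proposition~\ref{prop:alpha}, which consists of exactly the three observations you make (restriction of the partition, $K_{r-1}$-freeness of $G[N(x)]$, and extracting the $K_{r-2}$ from the saturated non-edge $xx'$). Your added remark that $W$ automatically avoids the part $U$ because its vertices are adjacent to $x'$ is a small but welcome clarification that the paper leaves implicit.
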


In the next two subsections, we shall bound $\beta_1$ from below and $\beta_2$ from above.

\subsection{Upper bounds for \texorpdfstring{$\beta_i$}{beta i}}

We start with an easy observation which helps us bound $\beta_i$ from above.

\begin{lemma}\label{lem:betaupper}
 For $k \ge r \ge 3$ and $1 \le i \le k-r+1$, $\beta_i(k,r) \le \beta_i(k-1,r-1) + i + 1$.
\end{lemma}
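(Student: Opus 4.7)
The plan is an explicit construction built by adding a constant number of vertices to an extremal witness for $\beta_i(k-1,r-1)$. Let $G'$ be a $K_{r-1}$-free $(k-1)$-partite graph on $\beta_i(k-1,r-1)$ vertices, with parts $V_1,\dots,V_{k-1}$, such that any $k-i-1$ of its parts induce a copy of $K_{r-2}$. I would form a $k$-partite graph $G$ on $\beta_i(k-1,r-1)+i+1$ vertices by introducing a brand new part $V_k=\{y\}$ with $y$ joined to every vertex of $G'$, and by adding $i$ further vertices $z_1,\dots,z_i$ where $z_j\in V_j$ is joined to every vertex of $V(G')\setminus V_j$. All pairs inside $\{y,z_1,\dots,z_i\}$ are left as non-edges.

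For $K_r$-freeness, the non-edges among $\{y,z_1,\dots,z_i\}$ force any clique of $G$ to use at most one of these new vertices; deleting it leaves a clique of size at least $r-1$ inside $G'$, contradicting that $G'$ is $K_{r-1}$-free. (The case of a clique using only $y$ needs a $K_{r-1}$ inside $N(y)=V(G')$, the case using only $z_j$ needs one inside $V(G')\setminus V_j\subseteq V(G')$, and both are impossible.)

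For the covering property, fix any $i$ parts of $G$ to delete and consider two cases. If $V_k$ survives, then only $i$ of the parts $V_1,\dots,V_{k-1}$ are deleted; the remaining $k-i-1$ parts of $G'$ contain a $K_{r-2}$ by the hypothesis on $G'$, and this together with $y$ gives the desired $K_{r-1}$. If $V_k$ is deleted, then only $i-1$ further parts of $G'$ are removed; since $z_1,\dots,z_i$ occupy $i$ distinct parts of $G'$, some $z_j$ survives. The $k-i-1$ surviving parts of $G'$ other than $V_j$ still contain a $K_{r-2}$, and adjoining $z_j$ yields a $K_{r-1}$.

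The main obstacle is the second case: since $G'$ is $K_{r-1}$-free, the construction has to manufacture $K_{r-1}$'s on its own once $V_k$ is removed. Distributing the $z_j$'s across $i$ distinct parts of $G'$ is what makes the pigeonhole ``some $z_j$ always survives after deleting $i-1$ parts'' work, while keeping the $z_j$'s mutually non-adjacent and non-adjacent to $y$ is exactly what prevents the resulting $K_{r-1}$'s from extending into a forbidden $K_r$ in $G$.
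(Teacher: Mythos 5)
Your proposal is correct and is essentially identical to the paper's construction: you add one vertex in a fresh $k$-th part joined to all of the old graph, plus $i$ vertices placed one-per-part in the first $i$ old parts and joined to everything outside their own part, with the $i+1$ new vertices kept mutually non-adjacent; the verification of $K_r$-freeness and of the covering property (split into whether the new part is among the deleted parts or not, with a pigeonhole on which $z_j$ survives) matches the paper's argument step for step.
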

\begin{proof}
Let $H=U_1\cup U_2\cup \dots\cup U_{k-1}$ be a $K_{r-1}$-free $(k-1)$-partite graph on $\beta_i(k-1,r-1)$ vertices such that the subgraph induced by any $k-i-1$ parts contains a $K_{r-2}$. We shall construct a $K_r$-free $k$-partite graph $G=V_1\cup V_2\cup \dots\cup V_k$ from $H$ with $|G|=|H|+(i+1)$ as follows. First, add new vertices $v_1$ to $U_1$, $v_2$ to $U_2$, $\dots$, $v_i$ to $U_i$ and $v_{i+1}$ to the new part $V_k$. This is possible since $k \ge i+2$. Now, join $v_{i+1}$ to every vertex in $H$ and, for every $1 \le j \le i$, join $v_j$ to every vertex in $H\setminus U_j$. Clearly, $G$ is $K_r$-free since $H$ is $K_{r-1}$-free.

Let $\mathcal{C}$ be a collection of $k-i$ parts of $G$. It remains to check that the subgraph of $G$ induced by $\mathcal{C}$ contains a $K_{r-1}$. First, suppose that $V_k \in \mathcal{C}$. By the induction hypothesis, the other $(k-1)-i$ parts $\mathcal{C}\setminus\{V_k\}$ induce a subgraph of $H$ containing a $K_{r-2}$. Together with $v_{i+1}\in V_k$, they form a $K_{r-1}$ in the subgraph of $G$ induced by $\mathcal{C}$ as required. Now, let us suppose that $V_k \not\in \mathcal{C}$. Then $\mathcal{C}$ must contain at least one of $V_1,V_2,\dots,V_i$. Without loss of generality, we may assume that $\mathcal{C}$ contains $V_1$. By the induction hypothesis, the other $(k-1)-i$ parts $\mathcal{C}\setminus\{V_1\}$ induce a subgraph of $H$ containing a $K_{r-2}$. Together with $v_1\in V_1$, they form a $K_{r-1}$ in the subgraph of $G$ induced by $\mathcal{C}$ as required.
\end{proof}

Lemma~\ref{lem:betaupper} immediately implies the following upper bound on $\beta_i$.

\begin{corollary}\label{cor:betaupper}
$\beta_i(k,r) \le (i+1)(r-1)$ for $k \ge r \ge 2$ and $1 \le i \le k-r+1$.
\end{corollary}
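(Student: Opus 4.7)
The plan is to prove this by induction on $r$, with Lemma~\ref{lem:betaupper} providing the inductive step essentially for free. The whole point of setting up Lemma~\ref{lem:betaupper} in the form $\beta_i(k,r) \le \beta_i(k-1,r-1) + i + 1$ is that iterating it collapses the first argument by one and the second argument by one simultaneously, so one ends with a base case in which the second argument is as small as possible.

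For the base case I would take $r = 2$. Here $\beta_i(k,2)$ asks for the fewest vertices in a $K_2$-free (i.e.\ edgeless) $k$-partite graph such that deleting any $i$ parts still leaves a $K_1$, i.e.\ at least one vertex. Taking one vertex in each of $i+1$ distinct parts clearly works, giving $\beta_i(k,2) \le i+1 = (i+1)(r-1)$. (In fact equality holds, but only the upper bound is needed.) Note the hypothesis $1 \le i \le k-r+1$ becomes $1 \le i \le k-1$, so such $i+1$ distinct parts do exist.

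For the inductive step, assume $\beta_i(k-1,r-1) \le (i+1)(r-2)$ whenever $1 \le i \le (k-1)-(r-1)+1 = k-r+1$. Then Lemma~\ref{lem:betaupper} gives
\[
    \beta_i(k,r) \;\le\; \beta_i(k-1,r-1) + i + 1 \;\le\; (i+1)(r-2) + (i+1) \;=\; (i+1)(r-1),
\]
which is exactly the desired bound, valid in the same range $1 \le i \le k-r+1$ we are given in the corollary.

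There is no real obstacle here; the argument is a routine one-line induction once Lemma~\ref{lem:betaupper} is in hand, and the only small thing to check is that the index ranges match up across the induction (which they do, since both $k$ and $r$ drop by one together, leaving $k-r+1$ unchanged).
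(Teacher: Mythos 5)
Your proof is correct and follows exactly the same route as the paper: establish the base case $\beta_i(k,2) \le i+1$ via an edgeless graph with one vertex in each of $i+1$ parts, then induct on $r$ using Lemma~\ref{lem:betaupper}. The only difference is cosmetic — you spell out the index-range bookkeeping a bit more explicitly than the paper does.
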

\begin{proof}
It is clear that $\beta_i(k,2)=i+1$ for $k \ge i+1$ by considering the empty graph on $i+1$ vertices where each vertex is in a different part and the remaining $k-i-1$ parts are empty.

By induction on $r$ and applying Lemma~\ref{lem:betaupper}, $\beta_i(k,r) \le \beta_i(k-1,r-1)+i+1 \le (i+1)(r-2)+i+1 = (i+1)(r-1)$ as required.
\end{proof}

We remark that there is a straightforward construction proving Corollary~\ref{cor:betaupper} for the case $k \ge (i+1)(r-1)$, namely, a disjoint union of $i+1$ cliques of size $r-1$ where each vertex is in a different part and the remaining $k-(i+1)(r-1)$ parts are empty. Clearly, the deletion of any $i$ parts does not destroy all the $K_{r-1}$.

Now we prove a better upper bound for $\beta_i(k,r)$ in the case when $i \ge 2$ and $k \ge i(r-1)+1$ by considering the $(r-2)$th power of the cycle $C_{i(r-1)+1}$.

\begin{proposition}\label{prop:betaupper}
$\beta_i(k,r) \le i(r-1)+1$ for $k \ge i(r-1)+1$ and $r,i \ge 2$.
\end{proposition}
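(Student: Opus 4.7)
The plan is to exhibit an explicit construction matching the bound: take the $(r-2)$-th power of a cycle of length $n := i(r-1)+1$, put each of its vertices in a distinct part of the $k$-partite graph, and leave the remaining $k-n$ parts empty (this is legal since $k \ge n$). Concretely, label the vertices $v_0,v_1,\dots,v_{n-1}$ around the cycle and join $v_a$ to $v_b$ whenever their cyclic distance is at most $r-2$. Each $v_j$ lives in its own part $V_j$, and $V_n,\dots,V_{k-1}$ are empty.

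Next I would verify that this graph is $K_r$-free. A clique in the $(r-2)$-th power of $C_n$ corresponds to a set of vertices whose pairwise cyclic distance is at most $r-2$; since $n = i(r-1)+1 \ge 2(r-1)+1 > 2(r-2)$, any such set must lie inside a single arc of $r-1$ consecutive vertices on $C_n$. Hence the maximum clique has size $r-1$, so no $K_r$ appears.

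Then I would show that deleting any $i$ parts cannot destroy every $K_{r-1}$. Removing at most $i$ vertices from $C_n$ leaves $n-i = i(r-2)+1$ vertices partitioned into at most $i$ arcs. By pigeonhole, one of these arcs contains at least
\[
    \left\lceil \frac{i(r-2)+1}{i} \right\rceil = r-1
\]
consecutive vertices of $C_n$, and any $r-1$ consecutive vertices form a $K_{r-1}$ in the $(r-2)$-th power. Thus the $K_{r-1}$ survives, proving that $\beta_i(k,r) \le n = i(r-1)+1$.

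I don't foresee any serious obstacle: the only delicate point is the pigeonhole step, where the extra $+1$ in $n = i(r-1)+1$ is exactly what is needed to force one arc to have length at least $r-1$ rather than $r-2$. One should also record the mild sanity check that $n \ge 2r-3$ so that the cycle is long enough for its $(r-2)$-th power to be $K_r$-free, which follows from $i,r \ge 2$.
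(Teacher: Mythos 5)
Your construction is exactly the paper's (the $(r-2)$-th power of $C_{i(r-1)+1}$ placed one vertex per part, remaining parts empty; the paper phrases the empty parts as a monotonicity remark), and your pigeonhole computation $\left\lceil (i(r-2)+1)/i \right\rceil = r-1$ is correct and is precisely the step the paper asserts without detail. However, your justification of $K_r$-freeness has a genuine error. The claim that for $n > 2(r-2)$ every clique of $C_n^{r-2}$ lies in an arc of $r-1$ consecutive vertices is false: in $C_9^{3}$ (so $r=5$, $i=2$, exactly your parameters) the set $\{0,3,6\}$ is a triangle contained in no $4$ consecutive vertices, and more dramatically, for $n = 2r-3 > 2(r-2)$ the graph $C_{2r-3}^{r-2}$ is the \emph{complete} graph $K_{2r-3} \supseteq K_r$, so your stated threshold is simply insufficient. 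Your final ``sanity check'' $n \ge 2r-3$ fails for the same reason. What is actually needed, and true, is that the clique number of $C_n^{r-2}$ equals $r-1$ whenever $n \ge 2(r-1)$ (for instance by observing that the complement has a spanning subgraph which is a union of cycles/paths of step $r-1$, or by an averaging argument on gaps); your $n = i(r-1)+1 \ge 2(r-1)+1$ clears this correct threshold comfortably, so the construction does work --- but you should prove $\omega(C_n^{r-2}) \le r-1$ directly rather than via the false ``single arc'' claim, and correct the threshold from $2r-3$ to $2r-2$.
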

\begin{proof}
Since $\beta_i(k,r)$ is decreasing in $k$ (by adding empty parts), it is enough to show that $\beta_i(k,r) \le i(r-1)+1$ for $k=i(r-1)+1$. Let $G$ be the $(r-2)$th power of the cycle $C_{i(r-1)+1}$, i.e. $G$ is a graph on $\mathbb{Z}_{i(r-1)+1}$ where $u,v$ are neighbors if $u-v = 1,2,\dots,r-2$. We view $G$ as a $(i(r-1)+1)$-partite graph with one vertex in each part. Clearly, $G$ is $K_r$-free if $i \ge 2$. Note that, after deleting any $i$ vertices of $G$, there are at least $r-1$ consecutive vertices remaining in $\mathbb{Z}_{i(r-1)+1}$, which form a $K_{r-1}$ as required.
\end{proof}

Proposition~\ref{prop:betaupper} together with Lemma~\ref{lem:betaupper} imply a better upper bound than that in Corollary~\ref{cor:betaupper} for $\beta_2(k,r)$ in the remaining cases, i.e when $k < 2r-1$.

\begin{proposition}\label{prop:beta2upper}
$\beta_2(k,r) \le	4r-k-2$ for $2 \le r < k < 2r-1$. 
\end{proposition}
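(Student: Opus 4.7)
The plan is to combine the recursive bound from Lemma~\ref{lem:betaupper} with the base case supplied by Proposition~\ref{prop:betaupper}. Specializing Lemma~\ref{lem:betaupper} to $i=2$ gives the recursion $\beta_2(k,r) \le \beta_2(k-1,r-1) + 3$, so iterating $t$ times yields
\[
	\beta_2(k,r) \le \beta_2(k-t,r-t) + 3t,
\]
as long as the intermediate pairs $(k-j, r-j)$ with $1 \le j \le t$ remain in the range $k-j \ge r-j \ge 3$ where Lemma~\ref{lem:betaupper} is valid.

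The idea is to iterate exactly until the argument leaves the regime $k' < 2r'-1$ and enters the regime $k' \ge 2r'-1$ where Proposition~\ref{prop:betaupper} applies. So I would set $t = 2r-k-1$ (which is positive by the assumption $k < 2r-1$), giving the new pair
\[
	(k-t,\, r-t) = (2k-2r+1,\, k-r+1).
\]
A quick check confirms that $k-t = 2(r-t)-1$, so Proposition~\ref{prop:betaupper} (with $i=2$) applies at this endpoint and yields $\beta_2(2k-2r+1, k-r+1) \le 2(k-r+1)-1 = 2k-2r+1$; for this to be legitimate we need $r-t = k-r+1 \ge 2$, which holds since $k \ge r+1$.

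Putting the two ingredients together,
\[
	\beta_2(k,r) \le (2k-2r+1) + 3(2r-k-1) = 4r-k-2,
\]
which is exactly the claim. There is no real obstacle here beyond verifying that the recursion stays within the admissible range of parameters and that the base case fires at the right moment; the proof is essentially a two-line computation on top of the previously established results.
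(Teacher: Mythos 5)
Your argument is correct and is essentially the same as the paper's: both iterate the recursion from Lemma~\ref{lem:betaupper} (yours explicitly, the paper's as induction on $2r-k$) down to the boundary case $k' = 2r'-1$, where Proposition~\ref{prop:betaupper} supplies the base value $2r'-1$. The arithmetic and the check that the intermediate parameters stay in range (reducing to $k \ge r+1$) both hold up.
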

\begin{proof}
We proceed by induction on $2r-k$. The base case when $2r-k = 1$ follows from Proposition~\ref{prop:betaupper}. Now, suppose that $2r-k \ge 2$. Applying Lemma~\ref{lem:betaupper},
	\[
		\beta_2(k,r) \le \beta_2(k-1,r-1)+3 \le (4(r-1)-(k-1)-2)+3 = 4r-k-2,
	\]
by the induction hypothesis, since $2r-k > 2(r-1)-(k-1) \ge 1$,
\end{proof}

Let us remark that a similar upper bound for general $\beta_i$ can be obtained by the same method. We believe that the bound in Proposition~\ref{prop:beta2upper} is, in fact, an equality.

\begin{conjecture}\label{conj:beta2}
$\beta_2(k,r) =	4r-k-2$ for $2 \le r < k < 2r-1$. 
\end{conjecture}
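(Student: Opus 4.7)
The upper bound $\beta_2(k,r) \le 4r - k - 2$ is already supplied by Proposition~\ref{prop:beta2upper}, so the task is to prove the matching lower bound $\beta_2(k,r) \ge 4r - k - 2$. The plan is to mirror the recursive construction in Lemma~\ref{lem:betaupper} by showing the inequality
\[
    \beta_2(k,r) \ge \beta_2(k-1, r-1) + 3 \qquad \text{for } r+1 \le k \le 2r-2,
\]
and then combining this with a base case to complete an induction on $r$.

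For the base case one can take $r = 3$ (only $k = 4$ is in range) and argue directly: a $K_3$-free $4$-partite graph in which every two parts have an edge between them cannot contain three singleton parts, since the three edges forced among those singletons would form a triangle; hence at most two parts are singletons, giving $\beta_2(4,3) \ge 6$. In addition, since at $k = 2r-2$ the reduction points to $\beta_2(2r-3, r-1)$, which sits on the boundary of the conjecture's range, an auxiliary lower bound $\beta_2(2r-3, r-1) \ge 2r-3$ matching the upper bound from Proposition~\ref{prop:betaupper} will also be required as a second seed; I expect this to follow by a direct clique-covering argument on $C_{2r-3}^{r-3}$-like extremal structures.

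The inductive step is the heart of the proof. Let $G$ be a minimum $K_r$-free $k$-partite graph whose every $k-2$ parts induce a $K_{r-1}$, fix one such $K_{r-1}$ with vertex set $\{u_1, \dots, u_{r-1}\}$ lying in parts $V_1, \dots, V_{r-1}$, and consider the neighborhood graph $H = G[N(u_1)]$. This $H$ is a $K_{r-1}$-free $(k-1)$-partite graph on the parts $V_2, \dots, V_k$; if one can show that $H$ itself enjoys the analogous $\beta_2$-property (i.e.\ every $k-3$ of its parts induce a $K_{r-2}$), then the inductive hypothesis yields $|H| \ge 4r - k - 5$. Provided also that the deficit $|G| - |H| \ge 3$ (which automatically holds when $|V_1| \ge 3$), one concludes $|G| \ge 4r - k - 2$, as required.

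The principal obstacle I foresee is ensuring simultaneously that (a) $H$ has the required $\beta_2$-property and (b) the deficit $|G| - |H| \ge 3$ holds. For (a), the subtlety is that a $K_{r-1}$ of $G$ avoiding a pair $\{V_i, V_j\}$ (with $i, j \ne 1$) might pass through a vertex of $V_1$ other than $u_1$; one has to choose $u_1$ cleverly, e.g.\ by maximizing its participation in the $K_{r-1}$'s of $G$ or by pigeonholing over the $\binom{k-1}{2}$ witness pairs, possibly after first symmetrizing $G$ by merging vertices of $V_1$ that share the same link outside $V_1$ (and checking that $K_r$-freeness survives this operation). For (b), if $|V_1| \le 2$ one must account for additional non-neighbors of $u_1$ in $V_2, \dots, V_k$; the $K_r$-freeness of $G$ together with the spread of $K_{r-1}$-cliques forced by the $\beta_2$-property should supply enough such non-neighbors, but making this precise is the main technical hurdle and may well force a strengthening of the inductive invariant beyond the vertex count alone.
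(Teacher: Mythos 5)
This statement is labeled a \emph{conjecture} in the paper, and the paper does not prove it: only the upper bound $\beta_2(k,r) \le 4r-k-2$ is established, in Proposition~\ref{prop:beta2upper}. The matching lower bound is exactly what remains open, so you are attempting a claim that the authors themselves left unresolved, and your proposal --- as you candidly acknowledge --- does not close the gap.

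Your plan is to prove $\beta_2(k,r) \ge \beta_2(k-1,r-1)+3$ (the reverse of Lemma~\ref{lem:betaupper} with $i=2$) and recurse down to base cases. The arithmetic is consistent with the conjectured value, the base case $\beta_2(4,3)\ge 6$ is correct (every pair of parts must carry an edge, so no part is empty and three singleton parts would force a triangle), and the second seed you ask for, $\beta_2(2r'-1,r')\ge 2r'-1$, is already supplied by Corollary~\ref{cor:beta2}, so you should simply cite it rather than re-derive it from the $C_{2r'-1}^{r'-2}$ construction, which in any case only witnesses the upper bound. The skeleton of the induction is therefore sound. The problem is that the inductive step \emph{is} the open part, and nothing in your write-up actually proves it. Taking $H=G[N(u_1)]$ for a vertex $u_1\in V_1$ lying in one $K_{r-1}$-clique of $G$ runs into the two obstacles you name yourself: (a) the $K_{r-1}$ witnessing the $\beta_2$-property of $G$ for a pair $\{V_i,V_j\}$ with $i,j\ne 1$ may have its $V_1$-vertex $u_1'$ with a link incomparable to $N(u_1)$, so it need not survive in $H$; and (b) when $|V_1|\le 2$ the deficit $|G|-|H|\ge 3$ is not automatic. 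Your suggested remedies --- pigeonholing over the $\binom{k-1}{2}$ witness pairs, symmetrizing $V_1$ by merging vertices with the same external link, or strengthening the inductive invariant --- are directions, not arguments; in particular, merging vertices of $V_1$ that share a link need not preserve the $\beta_2$-property or the minimality of $G$, and you do not show that it does. Note also that the paper's only lower-bound tool for $\beta$, namely Proposition~\ref{prop:intersection} (the Hajnal-type clique intersection lemma used to prove Proposition~\ref{prop:beta1}), requires the ambient graph to have at most $2s-1$ vertices where $s$ is the clique number; here $s=r-1$ and the graphs in question have $4r-k-3>2r-3$ vertices whenever $k<2r-1$, so that technique does not transfer. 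Until (a) and (b) are filled by genuine arguments, what you have is a strategy outline, not a proof.
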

For the remaining values of $k$, we shall see in the next subsection that $\beta_2(k,r) = 2r-1$ for $k \ge 2r-1$.

\subsection{Determining \texorpdfstring{$\beta_1$}{beta 1}}

We shall show that the upper bound for $\beta_1$ given by Corollary~\ref{cor:betaupper} is an equality. Recall that the clique number of a graph is the order of a maximum clique. 

\begin{proposition}\label{prop:beta1}
$\beta_1(k,r) = 2(r-1)$ for $k \ge r \ge 2$.
\end{proposition}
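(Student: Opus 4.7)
The upper bound $\beta_1(k,r) \le 2(r-1)$ is immediate from Corollary~\ref{cor:betaupper} applied with $i=1$; the extremal construction is two vertex-disjoint copies of $K_{r-1}$ placed in distinct parts. For the matching lower bound the plan is to show that every $K_r$-free $k$-partite graph $G$ satisfying the hypothesis of the proposition contains two vertex-disjoint copies of $K_{r-1}$, which immediately forces $|V(G)| \ge 2(r-1)$.

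First I would fix any $K_{r-1}$ subgraph $A = \{a_1, \dots, a_{r-1}\}$ with $a_i \in V_{p_i}$. By hypothesis, for each $i \in [r-1]$ there is a $K_{r-1}$ subgraph $B_i$ contained in $G - V_{p_i}$, so $a_i \notin B_i$. If some $B_i$ is vertex-disjoint from $A$ we are done, so assume $B_i \cap A$ is a nonempty subset of $A \setminus \{a_i\}$ for every $i$. Since $G$ is $K_r$-free, $\{a_i\} \cup B_i$ cannot be a clique; combined with the fact that $a_i$ is adjacent to every vertex of $A \setminus \{a_i\} \supseteq B_i \cap A$, this forces the existence of some $w_i \in B_i \setminus A$ with $a_i w_i \notin E(G)$.

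Now suppose for contradiction that $|V(G)| \le 2r-3$, so that $|V(G) \setminus A| \le r-2$. Pigeonhole on the $r-1$ vertices $w_1, \dots, w_{r-1}$ yields $w_i = w_j = w$ for some $i \ne j$. Because $B_i$ is a clique containing $w$ and $a_j w \notin E(G)$, we must have $a_j \notin B_i$, and symmetrically $a_i \notin B_j$; hence $B_i \cap A$ and $B_j \cap A$ both lie in $A \setminus \{a_i, a_j\}$. For $r = 3$ this last set is empty, contradicting $B_i \cap A \neq \emptyset$, so the base case is settled.

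For $r \ge 4$ I expect the iteration to be the main obstacle. The plan is to apply the hypothesis again, this time to the part $V_{p(w)}$ of the collision vertex $w$, producing a further $K_{r-1}$ $C$ with $w \notin C$, and to rerun the same ``non-neighbour in $C \setminus A$'' argument. Either $C$ is already vertex-disjoint from $A$ and we are done, or the new collisions in the small set $V(G) \setminus A$ further shrink the admissible intersections of all participating $K_{r-1}$'s with $A$; iterating, the process must terminate with a vertex outside $A$ whose neighborhood in $A$ has size $r-1$, completing a $K_r$ in $G$ and contradicting $K_r$-freeness. A potentially cleaner alternative is induction on $r$ with the trivial base $r = 2$, in which the inductive step would produce two disjoint $K_{r-1}$'s by combining the $K_{r-2}$ guaranteed by the induction hypothesis inside $N(a_1)$ with an independent $K_{r-1}$ supplied by the hypothesis at $V_{p_1}$; here the delicate point is verifying that the restricted graph on $N(a_1)$ retains the ``removal of any part still gives a $K_{r-2}$'' property.
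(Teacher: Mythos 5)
Your upper bound matches the paper's. For the lower bound, the framing you open with --- that every $K_r$-free $k$-partite graph with the $\beta_1$-property must contain two vertex-disjoint copies of $K_{r-1}$ --- is actually false. Take $r=4$, $k=6$, one vertex per part, and let the graph on $\{1,\dots,6\}$ be the union of the three triangles $\{1,2,3\}$, $\{1,4,5\}$, $\{2,4,6\}$. This graph is $K_4$-free, every five-part subgraph contains a triangle, and its full list of triangles is $\{1,2,3\},\{1,2,4\},\{1,4,5\},\{2,4,6\}$, no two of which are vertex-disjoint. It has exactly $2(r-1)=6$ vertices, so it does not contradict the proposition, but it does refute your stated plan. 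Fortunately your actual argument does not rely on that plan: you work by contradiction from $|V(G)|\le 2r-3$, which is the right hypothesis, and the pigeonhole step producing the collision $w_i=w_j$ and the consequent shrinking of $B_i\cap A$ and $B_j\cap A$ to $A\setminus\{a_i,a_j\}$ is correct.

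The genuine gap is that the proof is only finished for $r=3$. For $r\ge 4$ you offer two sketches and explicitly flag both as unresolved. The iteration idea (re-applying the hypothesis at $V_{p(w)}$) has no termination or invariant argument attached; the induction-on-$r$ alternative hinges on the claim that $G[N(a_1)]$ again has the property that removing any part leaves a $K_{r-2}$, and that claim is not established --- the $K_{r-1}$ guaranteed in $G-V_j$ has no reason to pass through $a_1$, so you cannot simply restrict it to $N(a_1)$. (Note that Lemma~\ref{lem:deg} in the paper derives exactly this kind of hereditary property, but only from the extra hypothesis that the non-edges inside $X$ are $K_r$-saturated, which is not available in the pure $\beta_1$ setting.) The paper avoids all of this with Proposition~\ref{prop:intersection}: in any graph on at most $2s-1$ vertices with clique number $s$, some vertex lies in every $K_s$; applied with $s=r-1$ this immediately forces the deletion of one part to destroy all $K_{r-1}$'s. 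That lemma (a cousin of Hajnal's clique collection lemma) is proved by a clean global counting argument with intersecting families and sidesteps the increasingly delicate case analysis your direct approach would require for each $r$. Your route may well be completable --- I checked by hand that the collision analysis closes out $r=4$ --- but as written it is a proof only for $r=3$ plus a plan, not a proof of the proposition.
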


The lower bound, is a consequence of the following observation.

\begin{proposition}\label{prop:intersection}
Let $G$ be a graph on at most $2s-1$ vertices with clique number $s$. Then there is a vertex which lies in every $K_s$ of $G$.
\end{proposition}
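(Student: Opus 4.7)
I would proceed by contradiction: suppose every vertex $v \in V(G)$ is missed by some $K_s$, called $K_v$. Fix an arbitrary $K_s$, call it $K$. For each $v \in K$, the clique $K_v$ satisfies $|K \cap K_v| \ge 2s - |V(G)| \ge 1$ (since $|K|+|K_v|=2s > |V(G)|$), and the set $K_v \cup \{v\}$ cannot be a clique, as $\omega(G) = s$. Therefore $v$ has a non-neighbor $u_v \in K_v$, and this non-neighbor must lie in $V \setminus K$, because every other vertex of $K$ is adjacent to $v$.

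The map $v \mapsto u_v$ sends $K$ (of size $s$) into $V \setminus K$ (of size at most $s-1$), so by pigeonhole some $u \in V \setminus K$ has at least two non-neighbors $v_1, v_2 \in K$. To exploit this, I would take any $K_s$ $M$ containing $u$: since $u$ is non-adjacent to both $v_1$ and $v_2$, we must have $|M \cap K| \le s - 2$, so $M$ differs significantly from $K$. The same pigeonhole applied to $M$ yields another vertex with two non-neighbors in $M$, and so on, producing a cascade of maximum cliques with small pairwise intersection.

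To actually close the contradiction I would pass to an extremal pair $K_1, K_2$ of maximum cliques minimizing $d \defeq |K_1 \cap K_2|$; necessarily $d \ge 1$. Let $D = K_1 \cap K_2$. Since $\bigcap \mathcal{K} = \emptyset$, some $K_3 \in \mathcal{K}$ avoids a vertex $x \in D$, and the minimality of $d$ forces $|K_3 \cap K_1|, |K_3 \cap K_2| \ge d$, while $|K_3 \cap D| \le d - 1$ (as $x \notin K_3$). Then $K_3$ contains vertices $a \in K_1 \setminus K_2$ and $b \in K_2 \setminus K_1$, which are adjacent since both lie in $K_3$. Hence $D \cup \{a, b\}$ is a clique of size $d + 2$, forcing $d \le s - 2$. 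Combining this with $|V \setminus (K_1 \cup K_2)| = |V(G)| - (2s-d) \le d - 1$ and tracking where $K_3$'s remaining vertices must lie should contradict $\omega(G) = s$.

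The main obstacle is precisely this closing step: the raw pigeonhole only yields a vertex with several non-neighbors in $K$, which in isolation is not forbidden. The extra mileage must come from the rigidity of cliques, namely that any two neighbors $a,b$ outside the intersection extend $D$ to a $(d+2)$-clique. This is essentially the content of Hajnal's classical theorem $|\bigcap \mathcal{K}| + |\bigcup \mathcal{K}| \ge 2\omega(G)$, which yields the proposition immediately since $|\bigcup \mathcal{K}| \le |V(G)| \le 2s - 1$ forces $|\bigcap \mathcal{K}| \ge 1$.
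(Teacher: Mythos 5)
Your direct argument is not complete, and you correctly flag the gap yourself. After fixing $K_1, K_2$ minimizing $d = |K_1 \cap K_2|$ and taking $K_3$ that misses some $x \in D = K_1 \cap K_2$, set $A = K_3 \cap (K_1 \setminus K_2)$, $B = K_3 \cap (K_2 \setminus K_1)$, $C = K_3 \cap D$. Your observations give $|C| \le d-1$, $|A|,|B| \ge d - |C| \ge 1$, and $|K_3 \setminus (K_1 \cup K_2)| \le d-1$, hence $|A|+|B|+|C| \ge s-d+1$. Since $D \cup A \cup B$ is a clique (all adjacencies are covered by $K_1$, $K_2$, or $K_3$), $\omega(G)=s$ forces $d + |A| + |B| \le s$; combining with $|A|+|B| \ge s-2d+2$ yields only $d \ge 2$. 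That is a restriction, not a contradiction: for $d=2$, $s=5$, $|C|=1$, $|A|+|B|=3$ and one stray vertex in $K_3$, all inequalities are tight and no forbidden clique appears. So "tracking where $K_3$'s remaining vertices must lie" does not, as written, contradict $\omega(G)=s$, and the extremal-pair argument needs a further idea that is missing here.

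Your fallback — deducing the proposition from Hajnal's clique collection lemma $|\bigcap\mathcal{K}| + |\bigcup\mathcal{K}| \ge 2\omega(G)$ — is a correct one-line deduction, and the paper explicitly notes this implication. However, the paper does not rely on Hajnal; it gives a self-contained proof by a double-counting identity $\sum_i |V_i| = |\bigcup_i V_i| + \sum_j |V_{\mathcal{C}_j}| + |\bigcap_i V_i|$ built from a clever combinatorial lemma producing $m-2$ intersecting families $\mathcal{C}_1,\dots,\mathcal{C}_{m-2} \subseteq \mathcal{P}([m])$ with prescribed coverage, using that $V_{\mathcal{C}}$ is a clique whenever $\mathcal{C}$ is intersecting. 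This argument is stronger than what you need — it reproves Hajnal's lemma in full — whereas your sketch, if completed, would give only the special case and currently does not close.
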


\begin{proof}[Proof of Proposition~\ref{prop:beta1}]
The upper bound follows from Corollary~\ref{cor:betaupper}. To prove the lower bound, suppose for contradiction that $G$ is a $K_r$-free $k$-partite graph on at most $2r-3$ vertices such that the subgraph induced by any $k-1$ parts contains a $K_{r-1}$. Applying Proposition~\ref{prop:intersection} with $s=r-1$, there is a vertex $v$ which lies in every $K_{r-1}$. In particular, the deletion of the part containing $v$ destroys all the $K_{r-1}$. Hence, $\beta_1(k,r) \ge 2r-2$.
\end{proof}

Let us remark that Proposition~\ref{prop:intersection} is a consequence of the clique collection lemma of Hajnal~\citep{Hajnal} which states that the sum of the number of vertices in the union and the intersection of a collection of maximum cliques is at least twice the clique number. Our argument below can also be used to give a new proof of Hajnal's clique collection lemma.

\begin{proof}[Proof of Proposition~\ref{prop:intersection}]
Let $V_1,V_2,\dots,V_m \subset V(G)$ be the vertex sets of the copies of $K_s$ in $G$. For a vertex $v\in V(G)$, let $I_v = \{i\in[m] : v\in V_i\}$ be the set of $K_s$ containing $v$. For a collection $\mathcal{C}\subset\mathcal{P}([m])$ of subsets of $[m]$, let $V_{\mathcal{C}} = \{v\in V(G) : I_v\in\mathcal{C}\}$. Observe that if $\mathcal{C}\subset\mathcal{P}([m])$ is intersecting then $V_{\mathcal{C}}$ induces a clique in $G$. Indeed, $u,v \in V_{\mathcal{C}}$ are neighbors since $I_u \cap I_v \not= \emptyset$, i.e. there is a clique containing both $u$ and $v$. Therefore, $|V_{\mathcal{C}}|\le s$ since $G$ is $K_{s+1}$-free. The following lemma implies the result.

\begin{lemma}
For $m\ge 3$, there exist intersecting families $\mathcal{C}_1,\mathcal{C}_2,\dots \mathcal{C}_{m-2}\subset\mathcal{P}([m])$ such that, for $I\subset [m]$, the number of $\mathcal{C}_j$ containing $I$ is
	$\begin{cases}
    0	& \text{if } I=\emptyset\\
		|I|-1	& \text{if } I\not=\emptyset,[m]\\
    m-2 & \text{if } I=[m].
	\end{cases}$
\end{lemma}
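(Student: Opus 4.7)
The plan is to exhibit the families $\mathcal{C}_j$ explicitly. For each $j \in [m-2]$ I would take $\mathcal{C}_j$ to be the up-closure in $2^{[m]}$ of the family with minimal elements $\{[j+1]\} \cup \{\{i,j+2\} : i \in [j+1]\}$; equivalently, $I \in \mathcal{C}_j$ if and only if either $[j+1] \subseteq I$, or both $j+2 \in I$ and $I \cap [j+1] \neq \emptyset$.

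The first task is to check that $\mathcal{C}_j$ is intersecting. It suffices to verify that the listed minimal elements pairwise intersect: two stars $\{i,j+2\}$ and $\{i',j+2\}$ share $j+2$, while $[j+1]$ meets any star $\{i,j+2\}$ at $i \in [j+1]$. Since every member of $\mathcal{C}_j$ contains one of these generators, the family itself is intersecting.

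The second (main) task is to verify the prescribed counts. For a fixed $I \subseteq [m]$, set $A_I = \{j \in [m-2] : [j+1] \subseteq I\}$ and $B_I = \{j \in [m-2] : j+2 \in I \text{ and } \min I \le j+1\}$, so that the quantity to compute is $|A_I \cup B_I|$. When $I = [m]$ both sets equal $[m-2]$, yielding $m-2$. For all other $I$, I would split cases on $t := \max\{s : [s] \subseteq I\}$ (with $t = 0$ if $1 \notin I$), write each of $|A_I|$, $|B_I|$ and $|A_I \cap B_I|$ in terms of $t$ and $|I|$, and then combine them via inclusion--exclusion to obtain $|I|-1$ (which, for $I = \emptyset$ or $|I| = 1$, is $0$ as required).

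The fiddliest part is the case analysis in this count: the overlap $A_I \cap B_I$ depends both on whether $1 \in I$ and on the length $t$ of the initial segment of $I$, and the boundary case $t = m$ must be handled separately so that $|A_I|$ is not overcounted beyond $[m-2]$. In every case, however, the double-counting cancels exactly and one ends up with $|A_I| + |B_I| - |A_I \cap B_I| = |I|-1$, which completes the verification.
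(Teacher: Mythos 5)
Your construction is correct, and in fact it produces exactly the same families as the paper, but you arrive at them non-inductively. The paper defines $\mathcal{C}_1,\dots,\mathcal{C}_{m-2}$ by induction on $m$: the last family $\mathcal{D}_{m-2}$ is taken to be $\{I : m\in I,\ |I|\ge 2\}\cup\{[m-1]\}$ and each earlier $\mathcal{D}_j$ is obtained by doubling the inductively given $\mathcal{C}_j\subset\mathcal{P}([m-1])$ via $\mathcal{D}_j=\mathcal{C}_j\cup\{I\cup\{m\}:I\in\mathcal{C}_j\}$; unrolling this recursion yields precisely your up-set description $I\in\mathcal{C}_j \iff [j+1]\subseteq I$ or ($j+2\in I$ and $I\cap[j+1]\neq\emptyset$). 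What you do differently is state that closed form up front and verify the counting identity directly by inclusion--exclusion on $A_I\cup B_I$. Your sketch is sound: with $t=\max\{s:[s]\subseteq I\}$ one gets, for $\emptyset\neq I\subsetneq[m]$, that $|A_I|=\max(0,t-1)$, $|B_I|=|I|-1-\mathbf{1}[\{1,2\}\subseteq I]$, and $|A_I\cap B_I|=\max(0,t-2)$, and these combine to $|I|-1$ in every case (the bound $t\le m-1$ for $I\neq[m]$ keeps $A_I$ inside $[m-2]$). The trade-off is the usual one: the paper's induction localizes the verification to a single recursive step and is very short, whereas your explicit form avoids the induction and makes the families easy to describe at once, at the cost of a slightly longer but entirely elementary case check. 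Either route is fine; if you write yours out in full, do spell out the three cases ($1\notin I$; $1\in I,2\notin I$; $\{1,2\}\subseteq I$), since that is where the cancellation actually happens.
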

	
\begin{proof}
The proof is by induction on $m$. For $m=3$, $\mathcal{C}_1=\{ \{1,2\}, \{2,3\}, \{3,1\}, \{1,2,3\} \}$ satisfies the required property. For $m\ge 4$, suppose by induction that there exist intersecting families $\mathcal{C}_1,\mathcal{C}_2,\dots,\mathcal{C}_{m-3}\subset\mathcal{P}([m-1])$ satisfying the property. We define $\mathcal{D}_1,\mathcal{D}_2,\dots,\mathcal{D}_{m-2}\subset\mathcal{P}([m])$ as follows. For $1\le j\le m-3$, let 
		\[
			\mathcal{D}_j = \mathcal{C}_j \cup \{I\cup\{m\} : I\in\mathcal{C}_j\}
		\]
	and
		\[
			\mathcal{D}_{m-2} = \{I\subset [m]: m\in I \text{ and } |I|\ge 2\} \cup \{[m-1]\}.
		\]
It is easy to check that $\mathcal{D}_1,\mathcal{D}_2,\dots,\mathcal{D}_{m-2}$ satisfy the required property.
	\end{proof}

Let us deduce the result. This is trivial when $m=1,2$ so we may assume that $m\ge 3$. Observe that
		\[
			\sum_{i=1}^m{|V_i|} = \left|\bigcup_{i=1}^m{V_i}\right| + \sum_{j=1}^{m-2}{|V_{\mathcal{C}_j}|} + \left|\bigcap_{i=1}^m{V_i}\right|.
		\]
Indeed, a vertex $v$ is counted on both sides $|I_v|$ times by the lemma. Using $|V_i| = s$, $\left|\bigcup_{i=1}^m{V_i}\right|\le 2s-1$ and $|V_{\mathcal{C}_j}|\le s$, we have
		\[
			ms \le (2s-1) + (m-2)s + \left|\bigcap_{i=1}^m{V_i}\right|
		\]
i.e.
		$
			\left|\bigcap_{i=1}^m{V_i}\right| \ge 1
		$
as required.
\end{proof}

We remark that the fact that $\beta_1(k,r) = 2(r-1)$ allows us to show that the upper bound for $\beta_2(k,r)$ when $k \ge 2r-1$ in Proposition~\ref{prop:betaupper} is an equality.

\begin{corollary}\label{cor:beta2}
$\beta_2(k,r) = 2r-1$ for $k \ge 2r-1$ and $r \ge 2$.
\end{corollary}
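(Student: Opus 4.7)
The upper bound $\beta_2(k,r) \le 2r-1$ is exactly the content of Proposition~\ref{prop:betaupper} with $i = 2$, so the whole task is the matching lower bound $\beta_2(k,r) \ge 2r-1$. The plan is to assume for contradiction that $G$ is a $K_r$-free $k$-partite graph with $k \ge 2r-1$, on $|V(G)| \le 2r-2$ vertices, such that any $k-2$ parts induce a $K_{r-1}$, and then to exhibit two parts whose removal destroys every $K_{r-1}$ in $G$.

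The first step pins down $|V(G)| = 2r-2$ exactly. Since $k \ge 2r-1 > 2r-2 \ge |V(G)|$, at least one part $Q_0$ of $G$ is empty; deleting any single part $P$ is the same as deleting $P$ and $Q_0$, so the hypothesis forces the deletion of any single part to leave a $K_{r-1}$. By the definition of $\beta_1$ and the identity $\beta_1(k,r) = 2(r-1)$ from Proposition~\ref{prop:beta1}, this gives $|V(G)| \ge 2r-2$, hence equality. The same "pad with an empty part" trick applied to each non-empty part $P$ also shows $|P| \le r-1$, since the $K_{r-1}$ in $V(G)\setminus P$ needs $r-1$ vertices.

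The main step now picks any non-empty part $P$ and considers $G' := G[V(G)\setminus P]$. The bounds just derived give $r-1 \le |V(G')| \le 2r-3 = 2(r-1)-1$. Moreover $G'$ is $K_r$-free (as a subgraph of $G$) and contains the $K_{r-1}$ obtained by deleting $P$ together with $Q_0$, so its clique number equals $r-1$. Proposition~\ref{prop:intersection} applied to $G'$ with $s = r-1$ then yields a vertex $u \in V(G')$ lying in every $K_{r-1}$ of $G'$. Letting $P_u \ne P$ denote the part of $G$ containing $u$, every $K_{r-1}$ of $G$ either meets $P$ or is a copy inside $G'$ and so meets $P_u$; hence $\{P, P_u\}$ is a pair of parts whose removal destroys all copies of $K_{r-1}$, contradicting the hypothesis.

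There is essentially no hard step: the whole argument is a short composition of Proposition~\ref{prop:beta1} (forcing $|V(G)| = 2r-2$) with Proposition~\ref{prop:intersection} applied to the smaller graph $G-P$. The only points requiring even mild care are the two inequalities $r-1 \le |V(G-P)| \le 2(r-1)-1$ that are needed to invoke Proposition~\ref{prop:intersection} non-vacuously, and these both fall out of the size bounds on parts obtained in the first step.
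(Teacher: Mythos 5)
Your proof is correct and follows essentially the same route as the paper's: delete a non-empty part $P$ and reduce to the $\beta_1$ analysis on $G-P$. The paper packages this step as the general inequality $\beta_i(k,r)\ge\beta_{i-1}(k-1,r)+1$ (obtained by deleting a non-empty part) and then cites $\beta_1(k-1,r)=2(r-1)$ from Proposition~\ref{prop:beta1}, whereas you first pin down $|V(G)|=2r-2$ and apply Proposition~\ref{prop:intersection} directly to $G-P$, which amounts to re-deriving $\beta_1(k-1,r)\ge 2r-2$ inline with a few extra (and strictly unnecessary) preparatory bounds.
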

\begin{proof}
Observe that $\beta_i(k,r) \ge \beta_{i-1}(k-1,r)+1$. Indeed, if $G$ is a $K_r$-free $k$-partite graph on $\beta_i(k,r)$ vertices such that the subgraph induced by any $k-i$ parts contains a $K_{r-1}$, then, by deleting a non-empty part of $G$, we obtain a $K_r$-free $(k-1)$-partite graph such that the subgraph induced by any $(k-1)-(i-1)$ parts contains a $K_{r-1}$. This graph must contains at least $\beta_{i-1}(k-1,r)$ vertices and therefore, $|G|-1 \ge \beta_{i-1}(k-1,r)$.

Hence, $\beta_2(k,r) \ge \beta_1(k-1,r)+1 = 2(r-1)+1 = 2r-1$ by Proposition~\ref{prop:beta1}.
\end{proof}

\subsection{Proof of Theorem~\ref{thm:alpha}\texorpdfstring{$(i)$}{(i)}}

The lower bound follows from Proposition~\ref{prop:alpha} and Proposition~\ref{prop:beta1}. The upper bound follows from Proposition~\ref{prop:alpha}, Proposition~\ref{prop:beta2upper} and Corollary~\ref{cor:beta2}.
\qed

\section{Proof of Theorem~\ref{thm:alpha}\texorpdfstring{$(ii)$}{(ii)}}
\label{sec:alpha(ii)}

For $k=2r-3$, we are done since the lower and upper bounds in Theorem~\ref{thm:alpha}$(i)$ match, i.e. $\alpha(k,r) = k(2r-4) = (k-1)(2k-3)$.

Now we shall describe constructions that match the lower bound $\alpha(k,r) \ge k(2r-4)$ in Theorem~\ref{thm:alpha}$(i)$ for the cases when ($k \ge 2r-2$  and $r$ is even) and ($k \ge 2r-1$  and $r=2\mod 3$), i.e. a $K_r$-partite-saturated $k$-partite graph $G$ containing an independent set $X$ of size $k$ consisting of exactly one vertex from each part of $G$ with $e(X,X^c) = k(2r-4)$. Lemma~\ref{lem:deg} tells us that such graph must satisfy $d(x)=2r-4$, for all $x \in X$.

Note that we do not have to worry about making the admissible non-edges inside $X^c$, $K_r$-saturated since we can keep adding admissible edges inside $X^c$ until every admissible non-edge inside $X^c$ is $K_r$-saturated.

Let $p \in \{2,3\}$ be a divisor of $r-2$. First we shall construct such $k$-partite graph $G$, for $k=2r-4+p$. We define $X = \{x_1,x_2,\dots,x_k\}$ and $X^c = \{y_1,y_2,\dots,y_k\}$, where the parts of $G$ are $\{x_i,y_i\}$, for $i=1,2,\dots,k$. There are no edges inside $X$. Let $y_iy_j$ be an edge iff $i,j$ are not consecutive elements of the circle $\mathbb{Z}_k$, and so $G[X^c]$ is the graph $K_k$ minus a cycle $C_k$. Let $x_iy_j$ is an edge iff $i \not= j \mod \frac{k}{p}$, i.e. $x_i$ is joined to all but $p$ equally spaced $y_j$. We claim that $G$ satisfies the required properties.

Clearly, we have $d(x) = k-p = 2r-4$ for all $x \in X$ and $e(X,X^c) = k(2r-4)$. Let us verify that $G$ is $K_r$-free. A clique inside $X^c$ is a set of non-consecutive elements of $\mathbb{Z}_k$, and so a largest clique inside $X^c$ has size $\left\lfloor\frac{k}{2}\right\rfloor = r-1$ for $p \in \{2,3\}$. Since a clique which is not inside $X^c$ can contain at most one vertex of $X$, it remains to check that the neighborhood of each $x_i$ does not contain a clique of size $r-1$. Viewing $X^c$ as a circle, $N(x_i)$ consists of $p$ segments of the circle, each of size $\frac{2r-4}{p}$, separated by gaps of size one. Since $\frac{2r-4}{p}$ is even, a largest clique in $N(x_i)$ has size $\frac{p(2r-4)}{2p} = r-2$.

It remains to show that the admissible non-edges inside $X$, and those between $X$ and $X^c$ are $K_r$-saturated. Let $x_iy_j$ be an admissible non-edge, and so $j=i\pm\frac{k}{p}$ in $\mathbb{Z}_k$. Clearly, $N(x_i)$ contains $r-2$ vertices which form a non-consecutive set of the circle with $y_j$. Therefore, there exists a $K_{r-2}$ in the common neighborhood of $x_i$ and $y_j$ as required. Now let $x_ix_j$ be an admissible non-edge. Then the common neighborhood of $x_i$ and $x_j$ consists of $2p$ segments of the circle separated by gaps of size one such that they form $p$ pairs where the sum of the sizes of each pair is $\frac{2r-4}{p}-1$, and so each pair consists of a segment of even size and a segment of odd size. Therefore, a largest non-consecutive set in $N(x_i)\cap N(x_j)$ has size $\frac{p(2r-4)}{2p} = r-2$. Hence, there exists a $K_{r-2}$ in $N(x_i)\cap N(x_j)$ as required.

We have constructed such $k$-partite graph $G_k$ for $k=2r-4+p$ with . Let us obtain $G_k$ for $k>2r-4+p$ from $G_{2r-4+p}$ by blowing up $x_1$ to a class $\{x_1\} \cup \{x_i:2r-3+p \le i \le k\}$ of size $k-(2r-4+p)+1$ where each copy of $x_1$ (not including itself) forms a part of $G_k$ of size one. Clearly, we have $d(x) = 2r-4$ for all $x \in X = \{x_1,x_2,\dots,x_k\}$ and $e(X,X^c) = k(2r-4)$. Since $G_{2r-4+p}$ is $K_r$-free, so is $G_k$.

It remains to check that the admissible non-edges inside $X$, and those between $X$ and $X^c$ are $K_r$-saturated. Any admissible non-edge inside $X$ which is not inside the blow up class of $x_1$ is $K_r$-saturated by the same property of $G_{2r-4+p}$. Any admissible non-edge inside the blow up class of $x_1$ is $K_r$-saturated since $N(x_1)$ contains a $K_{r-2}$ by the construction of $G_{2r-4+p}$. Any admissible non-edge $x_iy_j$ where $j\not=1$ or ($j=1$ and $i \le 2r-4+p$), is $K_r$-saturated by the same property of $G_{2r-4+p}$. Any admissible non-edge $x_iy_j$ where $j=1$ and $2r-3+p \le i \le k$, is $K_r$-saturated since $N(x_1)\cap N(y_1)$ contains a $K_{r-2}$ by the construction of $G_{2r-4+p}$.
\qed

\section{Proof of Theorem~\ref{thm:alpha}\texorpdfstring{$(iii)$}{(iii)}}
\label{sec:alpha(iii)}

In this section, we study $\alpha(k,r)$ for $r=3,4,5$. The values of $\alpha(k,3)$ and $\alpha(k,4)$ are completely determined while the values of $\alpha(k,5)$ are unknown for $k=5,6,8$.

\subsection{The function \texorpdfstring{$\alpha(k,3)$}{alpha(k,3)}}

We shall prove that $\alpha(k,3) = 3(k-1)$ for $k \ge 3$. The upper bound follows from Theorem~\ref{thm:alpha}$(i)$. Let us prove the lower bound.

Let $G = V_1\cup V_2\cup \dots \cup V_k$ be a $K_3$-partite-saturated $k$-partite graph $G$ containing an independent set $X = \{x_1,x_2,\dots,x_k\}$ with $x_i \in V_i$ for all $i$. By Lemma~\ref{lem:deg}, the deletion of any part of $G$ does not destroy all vertices of $N(x_i)$ for all $i$, i.e. $x_i$ is joined to at least two parts of $G$. Suppose for contradiction that $e(X,X^c) < 3(k-1)$, i.e. $X$ contains at least four vertices of degree $2$, say $x_1,x_2,x_3,x_4$. Let $y_i \in V_i$ and $y_j \in V_j$ with $1 < i < j \le k$ be the neighbors of $x_1$, and so $y_i$ and $y_j$ are not neighbors otherwise $x_1y_iy_j$ forms a triangle. Since $\{2,3,4\} \setminus \{i,j\} \not= \emptyset$, we may assume that $i,j \not= 2$, i.e. $x_1,x_2,y_i,y_j$ are from different parts of $G$. Since any pair in $X$ forms a $K_3$-saturated non-edge in $G$, they have a common neighbor. So $x_1$ and $x_2$ have a common neighbor, say $y_i$.

First we suppose that $x_2y_j$ is a non-edge. Then $x_2$ and $y_j$ have a common neighbor $y_l \in V_l$. Since $y_i$ and $y_j$ are not neighbors, $l \not= i$. We obtain a contradiction by observing that $x_iy_jy_l$ forms a triangle. We observe that $x_iy_j$ are neighbors since $x_1$ and $x_i$ have a common neighbor and $N(x_1)=\{y_i,y_j\}$. Similarly, $x_iy_l$ are neighbors since $x_2$ and $x_i$ have a common neighbor and $N(x_2)=\{y_i,y_l\}$.

Now, suppose that $x_2y_j$ is an edge, and so $N(x_1)=N(x_2)=\{y_i,y_j\}$. Then $x_iy_j$ are neighbors since $x_1$ and $x_i$ have a common neighbor. Similarly, $x_jy_i$ are neighbors. We know that $x_i$ and $x_j$ have a common neighbor $y_l$ with $l \not= i,j$. Then either $l \not= 1$ or $l \not= 2$, say $l \not= 1$. Since the non-edge $x_1y_l$ is $K_3$-saturated, $y_l$ is joined to either $y_i$ or $y_j$. This implies a contradiction that either $x_jy_iy_l$ or $x_iy_jy_l$ forms a triangle.
\qed

\subsection{The function \texorpdfstring{$\alpha(k,4)$}{alpha(k,4)}}

As a consequence of Theorem~\ref{thm:alpha}$(ii)$, we obtain that $\alpha(k,4) = 4k$ for $k \ge 5$. For the remaining case $k=4$, we have the bounds $16 \le \alpha(4,4) \le 18$ from Theorem~\ref{thm:alpha}$(i)$. We shall show that $\alpha(4,4) = 18$.

Consider the family of graphs appearing in the definition of $\alpha(r,r)$. Let $G = V_1\cup V_2\cup \dots \cup V_r$ be an $K_r$-partite-saturated $r$-partite graph $G$ containing an independent set $X = \{x_1,x_2,\dots,x_r\}$ with $x_i \in V_i$ for all $i$. We shall establish some properties of $G$ which will be useful in this subsection, the next subsection and Section~\ref{sec:diagonal}.

We say that a vertex $y \in X^c$ is \emph{$i$-special} if $y$ is the only neighbor of $x_i$ in the part of $G$ containing $y$. The \emph{special degree} of a vertex $y \in X^c$ is the number of $i \in [r]$ such that $y$ is $i$-special. We say that a vertex $y \in X^c$ is \emph{special} if the special degree of $y$ is at least one. Let us make some easy observations regarding the special vertices.

\begin{lemma}\label{lem:special}
Let $G = V_1\cup V_2\cup \dots \cup V_r$ be an $K_r$-partite-saturated $r$-partite graph $G$ containing an independent set $X = \{x_1,x_2,\dots,x_r\}$ with $x_i \in V_i$ for all $i$. The following hold for $r \ge 4$.
    \begin{enumerate}[label={$(\roman*)$}, ref={\thelemma$(\roman*)$}]
        \item A special vertex $y_i \in V_i$ is joined to every vertex of $X$ except $x_i$. \label{lem:special:i}
		\item Each $V_i$ contains at most one special vertex.\label{lem:special:ii}
		\item If $y_i \in V_i$ is $i'$-special and $y_j \in V_j$ is $j'$-special with $i' \not= j$ and $j' \not= i$ then $y_iy_j$ is an edge.\label{lem:special:iii}
		\item The number of vertices of special degree at least $2$ is at most $r-2$.\label{lem:special:iv}
		\item If $y_i \in V_i$ is $i'$-special and $y_j \in V_j$ with $j \not= i,i'$ then $y_j$ is joined to either $y_i$ or $x_{i'}$.\label{lem:special:v}
        \item For a special vertex $y_i \in V_i$, there exist parts $V_j$ and $V_l$ where $i, j, l$ are distinct such that $N(x_{i}) \cap V_j$ and $N(x_{i}) \cap V_l$ both contain a non-neighbor of $y_i$. \label{lem:special:vi}
	\end{enumerate}
\end{lemma}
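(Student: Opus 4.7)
The plan is to treat all six parts via a common mechanism: each admissible $K_r$-saturated non-edge produces a $K_{r-2}$ in a prescribed common neighborhood, and whenever that neighborhood includes $N(x_{i'})$, the uniqueness clause in the definition of \emph{$i'$-special} forces the $V_i$-vertex of the $K_{r-2}$ to be $y_i$. For $(i)$, given $j \not= i, i'$, I would saturate the non-edge $x_{i'} x_j$: the resulting $K_{r-2}$ in $N(x_{i'}) \cap N(x_j)$ has one vertex in every part other than $V_{i'}$ and $V_j$, and its vertex in $V_i$ is forced to be $y_i$, so $y_i \in N(x_j)$. Part $(ii)$ is then immediate, because two distinct special vertices in $V_i$ would both, by $(i)$, be joined to every $x_j$ with $j \not= i$, violating the uniqueness in the definition of special. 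For $(iii)$ I would saturate $x_{i'} x_{j'}$ when $i' \not= j'$, or $x_{i'} x_l$ for any $l \notin \{i, j, i'\}$ when $i' = j'$ (using $r \geq 4$ to find such $l$); the forcing argument then places both $y_i$ and $y_j$ inside the resulting $K_{r-2}$, yielding the edge $y_i y_j$.

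For $(iv)$ I would argue by contradiction: suppose at least $r-1$ vertices have special degree at least $2$. By $(ii)$ they lie in $r-1$ distinct parts, which I relabel so the representatives are $y_1 \in V_1, \ldots, y_{r-1} \in V_{r-1}$. For each pair $l \not= m$, each of $y_l, y_m$ has at least two special indices, so one can select a special index $s_l$ of $y_l$ with $s_l \not= m$ and a special index $s_m$ of $y_m$ with $s_m \not= l$; part $(iii)$ then gives $y_l y_m \in E(G)$. Combined with $(i)$, which provides $y_l x_r \in E(G)$ for each $l \leq r-1$, this builds a $K_r$ on $\{y_1, \ldots, y_{r-1}, x_r\}$, contradicting the $K_r$-freeness of $G$.

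Part $(v)$ is another direct application of saturation: if $y_j x_{i'}$ is already an edge we are done, and otherwise it is an admissible saturated non-edge, so $N(y_j) \cap N(x_{i'})$ contains a $K_{r-2}$ whose forced $V_i$-vertex is $y_i$, giving $y_i \in N(y_j)$.

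The main obstacle is $(vi)$, since it is the only part that couples saturation with the structural content of Lemma~\ref{lem:deg}. My plan is to prove the contrapositive. Suppose at most one part $V_m$ with $m \not= i$ satisfies $N(x_i) \cap V_m \not\subseteq N(y_i)$, and if no such part exists pick $m \not= i$ arbitrarily. Saturating the non-edge $x_i x_m$ produces a $K_{r-2}$ in $N(x_i) \cap N(x_m)$ with one vertex in each part $V_l$ for $l \not= i, m$, and by the choice of $m$ every such vertex lies in $N(y_i)$; hence the $K_{r-2}$ sits inside $N(x_i) \cap N(x_m) \cap N(y_i)$. Adjoining $x_m \in V_m$ and $y_i \in V_i$, which are adjacent by $(i)$, yields a $K_r$ in $G$, giving the required contradiction.
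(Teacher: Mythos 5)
Your proposal is correct and follows essentially the same route as the paper: each of $(i)$, $(iii)$, $(v)$, $(vi)$ is proved by saturating a well-chosen non-edge involving $x_{i'}$ (or $x_i$) and using the uniqueness in the definition of $i'$-special to force $y_i$ into the resulting $K_{r-2}$; $(ii)$ follows from $(i)$ and uniqueness; and $(iv)$ builds the $K_{r-1}$ on $\{y_1,\dots,y_{r-1}\}$ exactly as the paper does, choosing special indices $s_l \neq m$ and $s_m \neq l$ so that $(iii)$ applies. The only cosmetic difference is that you phrase $(vi)$ as a contrapositive while the paper argues by contradiction from the same negated hypothesis.
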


\begin{proof}
$(i)$ Let $y_i \in V_i$ be $i'$-special and let $j \not= i,i'$. Since the non-edge $x_{i'}x_j$ is $K_r$-saturated, the common neighborhood of $x_{i'}$ and $x_j$ contains a $K_{r-2}$ consisting of one vertex from each part of $G \setminus \left(V_{i'} \cup V_j\right)$. Then $y_i$ is in this $K_{r-2}$ since $y_i$ is the only neighbor of $x_{i'}$ in $V_i$, and so $y_i$ is joined to $x_j$.

$(ii)$ Suppose for contradiction that $V_i$ contains two special vertices $y_i$ and $z_i$ where $y_i$ is $i'$-special. Then, by $(i)$, $x_{i'}$ is joined to both $y_i$ and $z_i$ contradicting the fact that $y_i$ is the only neighbor of $x_{i'}$ in $V_i$.

$(iii)$ First, suppose that $i' \not= j'$. Since the non-edge $x_{i'}x_{j'}$ is $K_r$-saturated, the common neighborhood of $x_{i'}$ and $x_{j'}$ contains a $K_{r-2}$ consisting of one vertex from each part $G \setminus \left(V_{i'} \cup V_{j'}\right)$. Since $y_i$ is the only neighbor of $x_{i'}$ in $V_i$ and $y_j$ is the only neighbor of $x_{j'}$ in $V_j$, both $y_i$ and $y_j$ lie in this $K_{r-2}$. In particular, $y_iy_j$ is an edge.

Now, suppose that $i' = j'$. We can pick $l \not = i,j,i'$ because $r \ge 4$. Since the non-edge $x_{i'}x_l$ is $K_r$-saturated, the common neighborhood of $x_{i'}$ and $x_l$ contains a $K_{r-2}$ consisting of one vertex from each part of $G \setminus \left(V_{i'} \cup V_l\right)$. Since $y_i$ is the only neighbor of $x_{i'}$ in $V_i$ and $y_j$ is the only neighbor of $x_{i'}$ in $V_j$, both $y_i$ and $y_j$ lie in this $K_{r-2}$. In particular, $y_iy_j$ is an edge.

$(iv)$ Suppose for contradiction that there exist vertices $y_1,y_2,\dots,y_{r-1}$ of special degree at least $2$. By $(ii)$, they lie in different parts of $G$, say $y_i \in V_i$ for $1 \le i \le r-1$. We claim that they form a $K_{r-1}$ which would be a contradiction since, together with $x_r$, they form a $K_r$ by $(i)$. Now we show that any $y_iy_j$ is an edge. Since $y_i$ and $y_j$ have special degree at least $2$, there exist $i' \not= j$ and $j' \not= i$ such that $y_i$ is $i'$-special and $y_j$ is $j'$-special. Therefore, $y_iy_j$ is an edge by $(iii)$.

$(v)$ Suppose that $x_{i'}y_j$ is a non-edge. Then the common neighborhood of $x_{i'}$ and $y_j$ contains a $K_{r-2}$ consisting of one vertex from each part of $G \setminus \left(V_{i'} \cup V_j\right)$. Then $y_i$ is in this $K_{r-2}$ since $y_i$ is the only neighbor of $x_{i'}$ in $V_i$, and so $y_i$ is joined to $y_j$.

$(vi)$ Suppose for contradiction that there exists $j \in [r] \setminus \{i\}$ such that $y_i \in V_i$ is joined to every vertex in $N(x_{i}) \cap V_l$ for all $l \not= i,j$. Since the non-edge $x_{i}x_j$ is $K_r$-saturated, the common neighborhood of $x_{i}$ and $x_j$ contains a $K_{r-2}$ consisting of one vertex from each part of $(G \setminus X) \setminus \left(V_{i} \cup V_j\right)$. We obtain a contradiction by observing that this $K_{r-2}$, together with $x_j$ and $y_i$, form a $K_r$. Indeed, by assumption, this $K_{r-2}$ is also in the neighborhood of $y_i$ and $x_jy_i$ is an edge by $(i)$.
\end{proof}

Now we are ready to show that $\alpha(4,4) \ge 18$. Suppose for contradiction that $\alpha(4,4) \le 17$, i.e. there exists a $K_4$-partite-saturated $4$-partite graph $G = V_1\cup V_2\cup V_3\cup V_4$ containing an independent set $X = \{x_1,x_2,x_3,x_4\}$ with $x_i \in V_i$ for all $i$ such that $\sum_{i=1}^4 d(x_i) \le 17$. By Lemma~\ref{lem:deg}, $d(x_i) \ge \beta_1(3,3) = 4$ and each $x_i$ has some neighbor in $V_j$ for $j \not= i$. Therefore, there are at least three vertices of degree $4$ and possibly one of degree $5$. Since a vertex of degree $4$ in $X$ creates at least two special vertices and a vertex of degree $5$ in $X$ creates at least one special vertex, the sum of the special degrees of the vertices in $X^c$ is at least $2+2+2+1=7$. By Lemma~\ref{lem:special:iv}, there is a vertex of special degree $3$, say $y_1 \in V_1$.

For $i=2,3,4$, since $y_1$ is $i$-special, $x_i$ has at least three neighbors in $N(y_1) \cup \{y_1\}$, each in a different part of $G$, by Lemma~\ref{lem:deg}. On the other hand, $y_1$ has at least two non-neighbors, say $y_2 \in V_2$ and $y_3 \in V_3$, by Lemma~\ref{lem:special:vi}. By Lemma~\ref{lem:special:v}, $x_iy_2$ is an edge for $i \not= 2$ and $x_iy_3$ is an edge for $i \not= 3$. So $x_4$ has five neighbors, i.e. $y_2$, $y_3$ and three vertices in $N(y_1) \cup \{y_1\}$, and $d(x_1)=d(x_2)=d(x_3)=4$. Since $x_2$ has four neighbors including $y_3$ and it has some neighbor in $(N(y_1) \cup \{y_1\}) \cap V_j$ for each $j=1,3,4$, it has exactly one neighbor in $V_4$, say $y_4$. Similarly, $x_3$ has exactly one neighbor in $V_4$ which has to be the same vertex $y_4$ by Lemma~\ref{lem:special:ii}.

We obtain a contradiction by observing that $x_1y_2y_3y_4$ forms a $K_4$. First, note that $x_1y_4$ is an edge by Lemma~\ref{lem:special:i}. Now $y_4$ is not $1$-special otherwise $y_4$ would have special degree $3$ and by repeating the argument above with $y_1$ replaced by $y_4$, we could deduce that $x_1$, $x_2$, or $x_3$ had degree $5$. Therefore, the neighbors of $x_1$ are $y_2$, $y_3$, $y_4$ and a vertex in $V_4$. 
Since $y_2,y_3$ are both $1$-special and $y_4$ is $2,3$-special, $y_2y_3y_4$ forms a triangle by Lemma~\ref{lem:special:iii}.
\qed

\subsection{The function \texorpdfstring{$\alpha(k,5)$}{alpha(k,5)}}

As a consequence of Theorem~\ref{thm:alpha}$(i)$ and $(ii)$, we obtain that
	\begin{align*}
		\alpha(k,5) &= 6k \quad\text{ for } k=7 \text{ or } k \ge 9,\\
		&30 \le \alpha(5,5) \le 36,\\
		&36 \le \alpha(6,5) \le 40,\\
		&48 \le \alpha(8,5) \le 49.
	\end{align*}
We shall improve the lower bound for $\alpha(5,5)$ to $33$.


    Suppose for contradiction that $\alpha(5,5) \le 32$, i.e. there exists a $K_5$-partite-saturated $5$-partite graph $G = V_1\cup V_2\cup V_3\cup V_4\cup V_5$ containing an independent set $X = \{x_1,x_2,x_3,x_4,x_5\}$ with $x_i \in V_i$ for all $i$ such that $\sum_{i=1}^5 d(x_i) \le 32$. 
    Write $Y_{i}$ for $V_{i} \setminus \left\{ x_{i} \right\}$.
    By Lemma~\ref{lem:deg}, $d(x_i) \ge \beta_1(4,4) = 6$ and each $x_i$ has some neighbor in $V_j$ for $j \not= i$. 
    Therefore, there are either four vertices in $X$ of degree $6$ or there are three vertices of degree $6$ and two of degree $7$.
    Since a vertex of degree $6$ in $X$ creates at least two special vertices and a vertex of degree $7$ in $X$ creates at least one special vertex, the sum of the special degrees of the vertices in $X^c$ is at least $8$, and hence, there exists a vertex of special degree at least two.
    Let $i$ be such that there is a special vertex $y \in Y_{i}$ with special degree $d_{s}(y)$ at least two where $(d(x_{i}), d_{s}(y))$ is maximum in lexicographical order\footnote{We say that $(a, b) \preccurlyeq (c, d)$ if $a < c$ or $a = c$ and $b \le d$, where $\preccurlyeq$ denotes the lexicographical order relation.}.
    Without loss of generality we can assume that $i = 1$.
    Let $N = N(y) \setminus X$.
    By Lemma~\ref{lem:special:vi}, $x_{1}$ has two neighbours, say $y_{2}, y_{3}$, belonging to two distinct parts of $G$, different from $V_{1}$, which are non-neighbours of $y$.
    Without loss of generality, we can assume that $y_{2} \in Y_{2}$ and $y_{3} \in Y_{3}$.

    For a pair of non-adjacent vertices $u, v \in G$ and $S \subset G$, we say that $S$ is $uv$-\emph{saturating} if adding the edge of $uv$ to $G$ creates a copy $K$ of $K_{5}$ such that $S \subseteq K$.
    If $S = \left\{ z \right\}$ then we simply say that $z$ is $uv$-saturating.
    Notice that if $S$ is $uv$-saturating then $S$ induces a clique.
    
    In the rest of the proof, we shall repeatedly use the following lemma.
    \begin{lemma} \label{lemma33}
        Given  $i \in \left\{ 2, 3, 4, 5 \right\}$ the following hold.
        \begin{enumerate}[label={$(\roman*)$}, ref={\thelemma$(\roman*)$}]
            \item \label{lemma33:i}     If $j \in \left\{ 2, 3, 4, 5 \right\} \setminus \left\{ i \right\}$ then $x_{i}$ has a neighbour in $V_{j} \cap N$.
                                        In particular, $d_{N}(x_{i}) \ge 3$.
            \item \label{lemma33:ii}    If $y$ is $i$-special then $x_{i}$ is adjacent to $y_{j}$ for every $j \in \left\{ 2, 3 \right\} \setminus \left\{ i \right\}$.
            \item \label{lemma33:iv}    If $y$ is $x_{i}x_{j}$-saturating, for every $j \in \left\{ 2, 3, 4, 5 \right\} \setminus \left\{ i \right\}$, then $d_{N}(x_{i}) \ge 4$.
            \item \label{lemma33:iii}   If $y$ is $i$-special or $d_{s}(y) \ge 3$ then $d_{N}(x_{i}) \ge 4$.
            \item \label{lemma33:v}     If $y$ is $2,3$-special and $i \in \left\{ 4, 5 \right\}$, then $d(x_{i}) \ge 7$.
            \item \label{lemma33:vi}    If $i \in \left\{ 2, 3 \right\}$ and there are $p$ vertices in $X \setminus \left\{ x_{1} \right\}$ all of which have neighbours in $Y_{i} \setminus N$ then there is no vertex in $V_{i}$ with special degree bigger than $\max\left\{ 1, 3-p \right\}$.
            \item \label{lemma33:vii}   $Y_{3} \cup Y_{4} \subset N$.
        \end{enumerate}
    \end{lemma}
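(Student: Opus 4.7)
The strategy is to verify $(i)$--$(vii)$ in turn, each time invoking $K_{5}$-saturation on a carefully chosen admissible non-edge and combining it with Lemma~\ref{lem:special} (applied with $r=5$) and the lex-maximality of the pair $(d(x_{1}), d_{s}(y))$ used to select $y$. The recurring trick is that whenever $y$ is $i'$-special, the vertex of $V_{1}$ sitting at the corner of any triangle saturating a non-edge $x_{i'}x_{j}$ is forced to be $y$ itself, so the remaining two vertices of that triangle lie in $N \cap N(x_{i'}) \cap N(x_{j})$ and occupy the two parts outside $\{V_{1}, V_{i'}, V_{j}\}$.

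I would begin with the easy clauses. Statement~$(ii)$ is immediate from Lemma~\ref{lem:special:v} applied to $y_{2}\in Y_{2}\setminus N$ and $y_{3}\in Y_{3}\setminus N$ (each being a non-neighbour of $y$ in a part different from $V_{1},V_{i}$, hence forced to be joined to $x_{i}$). For~$(i)$, given distinct $i, j \in \{2,3,4,5\}$, I would pick $i' \in \{2,3,4,5\} \setminus \{i,j\}$ with $y$ being $i'$-special (which is possible unless the set of special indices of $y$ is exactly $\{i,j\}$, a degenerate case handled separately using Lemma~\ref{lem:special:v}). The triangle saturating $x_{i}x_{i'}$ then yields a neighbour of $x_{i}$ in $V_{j}\cap N$, and $d_{N}(x_{i}) \ge 3$ follows by choosing $j$ three times.

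Next I would handle the two degree bounds. For~$(iii)$, suppose $y$ is $x_{i}x_{j}$-saturating for every $j\in \{2,3,4,5\}\setminus\{i\}$. The three associated triangles each furnish neighbours of $x_{i}$ in $V_{k}\cap N$ for both $k\in \{2,3,4,5\}\setminus\{i,j\}$, so every $m\in\{2,3,4,5\}\setminus\{i\}$ is hit, giving $d_{N}(x_{i})\ge 3$. The crucial step is ruling out equality: if $d_{N}(x_{i}) = 3$ with unique neighbour $v_{m}\in V_{m}\cap N$ for each $m$, then each of the three saturating triangles forces the pair $v_{k}v_{\ell}$ to be an edge, so $\{v_{m} : m\ne i\}$ forms a triangle. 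Together with $y$ (adjacent to each $v_{m}$) and $x_{i}$ (adjacent to $y$ and to each $v_{m}$) this yields a $K_{5}$, contradicting $K_{5}$-freeness. Statement~$(iv)$ reduces to~$(iii)$: in both its hypotheses one checks that $y$ lies in every $K_{3}$ saturating $x_{i}x_{j}$, since either $y$ is the unique $V_{1}$-neighbour of $x_{i}$, or $d_{s}(y)\ge 3$ forces at least one of $i, j$ to be a special index of $y$ for each $j\ne i$.

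Statement~$(v)$ is where I expect the main obstacle, because $y$ being only $2,3$-special means $(iv)$ does not directly apply for $i \in \{4,5\}$. My plan is to collect neighbours of $x_{i}$ from three sources: $(i)$ supplies $d_{N}(x_{i})\ge 3$; Lemma~\ref{lem:special:i} contributes $y\in N(x_{i})\setminus N$; and the admissible pairs $x_{i}y_{2}, x_{i}y_{3}$ contribute additional vertices of $N(x_{i})\setminus N$ (each edge directly; each non-edge via saturation, since $y\notin N(y_{2})\cup N(y_{3})$ forces a fresh $V_{1}$-neighbour of $x_{i}$). These combine to give $d(x_{i})\ge 7$ outside a single degenerate configuration in which $y_{2}, y_{3}\notin N(x_{i})$ and the two resulting $V_{1}$-neighbours coincide into one vertex $w$; I would eliminate this case by invoking the lex-maximality of $(d(x_{1}), d_{s}(y))$, since $w$ would then play the role of a special vertex in $V_{1}$ in a way that either contradicts the choice $i=1$ or creates a $K_{5}$. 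Finally, $(vi)$ and $(vii)$ should fall out by counting edges from $X\setminus\{x_{1}\}$ into $V_{i}$, combining $(v)$ with parts $(ii)$, $(iii)$, and $(iv)$ of Lemma~\ref{lem:special} to bound both the special degree in $V_{i}$ and the set $V_{i}\setminus N$ under the running lex-maximality.
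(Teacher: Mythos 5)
Your treatment of parts $(i)$--$(iv)$ is sound and essentially matches the paper: $(ii)$ is exactly Lemma~\ref{lem:special:v}; $(i)$ uses a saturating triangle through $y$ (though the ``degenerate case'' where $y$ is $\{i,j\}$-special is actually \emph{easier}, since $y$ then already lies in the triangle saturating $x_i x_k$ for any $k$); and the ``rule out equality'' step for $(iii)$ is exactly the content of $\beta_1(3,3)=4$, which the paper invokes directly. The reduction of $(iv)$ to $(iii)$ is also the paper's argument.

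The genuine gap is in part $(v)$. Your count never reaches $7$: three neighbours in $N$, plus $y$, plus at most two more from the pairs $x_i y_2, x_i y_3$ (each edge contributes $y_j \notin N$; each non-edge contributes a $V_1$-neighbour $\ne y$), gives at most $3+1+2=6$ in \emph{every} case, not just a single degenerate one. The additional ``degenerate configuration'' you identify only drops the count to $5$, and your proposed fix -- that $w$ ``plays the role of a special vertex in $V_1$'' and contradicts lex-maximality -- does not follow: $w$ being a common neighbour of $x_i$ with $y_2$ (or $y_3$) in $V_1$ does not make $w$ the \emph{unique} neighbour of any $x_j$ in $V_1$, so $w$ need not be special at all. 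The paper handles $(v)$ very differently: if $y$ is $4$-special it applies $(ii)$ and $(iv)$ to get $d_N(x_4)\ge 4$ plus $y,y_2,y_3$, hence $7$; otherwise it supposes $d(x_4)=6$, deduces the tight profile $d_N(x_4)=3$, $d_{Y_1}(x_4)=2$, $|N(x_4)\setminus(Y_1\cup N)|=1$ (the last via saturation of $x_1x_4$, an ingredient missing from your tally), and then assembles a forbidden $K_5$ by tracking the saturating triangles of $x_4x_5$, $x_1x_4$, and $x_4y_3$. This is a substantial piece of the proof, not something that falls out of a direct count. Similarly, parts $(vi)$ and $(vii)$ are underdeveloped in your sketch: $(vi)$ is indeed a short counting argument about neighbours of $x_3,x_4,x_5$ in $V_i$, but $(vii)$ (which, note, should read $Y_4\cup Y_5\subset N$; $Y_3\not\subset N$ since $y_3\in Y_3\setminus N$) is itself a case analysis on $d_s(y)$ and on which indices $y$ is special for, using $(ii)$, $(iv)$, $(v)$, $(vi)$ and the lex-maximality assumption in a specific order.
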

    \begin{proof}
        $(i)$ Observe that we can choose $k \in \left\{ 2, 3, 4, 5 \right\} \setminus \left\{ i, j \right\}$ such that $y$ is either $i$-special or $k$-special.
        Since there must be a triangle in the common neighbourhood of $x_{i}$ and $x_{j}$ which uses $y$, we have that the remaining two vertices belong to $N$.
        Hence $x_{i}$ has a neighbour in $N \cap x_{j}$.

        $(ii)$ This follows directly from Lemma~\ref{lem:special:v}.

        $(iii)$
        We shall show that $d_{N}(x_{i}) \ge \beta_{1}(3, 3) = 4$.
        Take any $j \in \left\{ 2, 3, 4, 5 \right\} \setminus \left\{ i \right\}$.
        Since $y$ is $x_{i}x_{j}$-saturating then there is an edge in the common neighbourhood of $x_{i}$ and $x_{j}$ in $N \setminus \left( V_{i} \cup V_{j} \right)$.
        Observe that the common neighbourhood of $x_{i}$ and $y$ cannot contain a $K_{3}$, hence $d_{N}(x_{i}) \ge \beta_{1}(3, 3) = 4$.

        $(iv)$ Take any $j \in \left\{ 2, 3, 4, 5 \right\} \setminus \left\{ i \right\}$. 
        Since $y$ is either $i$- or $j$-special, it follows that $y$ is $x_{i}x_{j}$-saturating.
        Hence, by $(ii)$, $d_{N}(x_{i}) \ge 4$.

        $(v)$ 
        Without loss of generality we can assume that $i = 4$.
        If $y$ is also $4$-special then it follows from $(ii)$ and $(iv)$ that $d_{N}(x_{4}) \ge 4$ and $x_{4}$ is adjacent to $y, y_{2}, y_{3}$, therefore $d(x_{4}) \ge 7$.
        Hence we can assume that $y$ is not $4$-special.
        Suppose for contradiction that $d(x_{4}) = 6$.
        From $(i)$, we have that $d_{N}(x_{4}) \ge 3$ and since $y$ is not $4$-special we have that $d_{Y_{1}}(x_{4}) \ge 2$.
        Moreover, $x_{4}$ has to have at least one neighbour not in $Y_{1} \cup N$ as otherwise there would be a copy of $K_{5}$ in $G$, as seen by considering the non-edge $x_{1}x_{4}$.
        Therefore, $d(x_{4}) = d_{Y_{1}}(x_{4}) + d_{N}(x_{4}) + |N(x_{4}) \setminus \left( Y_{1} \cup N \right)| \ge 3 + 2 + 1 = 6 = d(x_{4})$.
        Hence, $d_{Y_{1}}(x_{4}) = 3$, $d_{N}(x_{4}) = 4$ and $|N(x_{4}) \setminus \left( Y_{1} \cup N  \right)| = 1$. 
        We shall obtain a contradiction by finding a copy of $K_{5}$ in the graph $G$.
        
        Suppose $\left\{ z_{1}, z_{2}, z_{3} \right\}$ is $x_{4}x_{5}$ saturating, with $z_{i} \in V_{i}$.
        We claim that $y \neq z_{1}$ and $\left\{ z_{2}, z_{3} \right\} \not\subseteq N$.
        Suppose for contradiction that it is not the case.
        If $y$ is $x_{4}x_{5}$-saturating then from $(iii)$ we have that $d_{N}(x_{4}) \ge 4$ hence we obtain a contradiction.
        We can therefore assume that $y$ is not $x_{4}x_{5}$-saturating and hence $z_{1} \neq y$.
        Whence $z_{2}, z_{3} \in N$.
        Recall that $\left\{ z_{1}, z_{2}, z_{3} \right\}$ form a triangle and therefore there is an edge between $z_{2}, z_{3}$.
        By assumption $z_{2}$ and $z_{3}$ are neighbours of $y$, hence $y, z_{2}, z_{3}$ form a triangle, and therefore $y$ is $x_{4}x_{5}$-saturating since $y, z_{2}, z_{3}$ belong to the common neighbourhood of $x_{4}$ and $x_{5}$, which contradicts the assumption that $y$ is not $x_{4}x_{5}$-saturating.

        Without loss of generality we can assume that $z_{2} \not\in N$.
        Using $(i)$, we can therefore suppose that $N(x_{4}) \cap Y_{1} = \left\{ y, z_{1} \right\}$, $N(x_{4}) \cap Y_{2} = \left\{w, z_{2}\right\}$, $N(x_{4}) \cap Y_{3} = \left\{ z_{3} \right\}$ and $N(x_{4}) \cap Y_{5} = \left\{ z_{5} \right\}$, for some $w, z_{3}, z_{5} \in N$.
        We shall obtain a contradiction by observing that $z_{1}, z_{2}, z_{3}, x_{4}, z_{5}$ form a copy of $K_{5}$.
        First we claim that $\{z_{2}, z_{3}, z_{5}\}$ is $x_{1}x_{4}$-saturating.
        Indeed, there must be a triangle in the common neighbourhood of $x_{1}$ and $x_{4}$, with one vertex in each $V_{3}, V_{4}, V_{5}$.
        There are only two candidates for the triangle: $z_{2}, z_{3}, z_{5}$ or $w, z_{3}, z_{5}$.
        It cannot be $w, z_{3}, z_{5}$ since they are all neighbours of $y$, hence $y, w, z_{3}, x_{4}, z_{5}$ would form a copy of $K_{5}$.
        Hence we must have that the set $\{z_{2}, z_{3}, z_{5}\}$ is $x_{1}x_{4}$-saturating.
        Now, since $x_{4}$ is not adjacent to $y_{3}$, and $y_{3}$ is not adjacent to $y$ we must have an edge between $z_{1}$ and $z_{5}$.
        Indeed, there must be a triangle in the common neighbourhood of $x_{4}$ and $y_{3}$ with a vertex in each $V_{1}, V_{2}, V_{3}$.
        Since $x_{4}$ has only one neighbour in $V_{5}$, i.e. $z_{5}$, and $x_{4}$ and $y_{3}$ have only one common neighbour in $V_{1}$, i.e. $z_{1}$, we must have an edge between $z_{1}$ and $z_{5}$.

        Therefore we have that $z_{1}, z_{2}, z_{3}$ form a triangle, $z_{2}, z_{3}, z_{5}$ form a triangle, and $z_{1}, z_{5}$ are adjacent.
        It easy to see now that $z_{1}, z_{2}, z_{3}, x_{4}, z_{5}$ form a copy of $K_{5}$.

        $(vi)$ Let $v$ be a special vertex in $V_{2} \cup V_{3}$, say in $V_{2}$.
               First observe that if $v$ is $1$-special then $x_{3}, x_{4}, x_{5}$ are all adjacent to $y_{2} \in Y_{2} \setminus N$.
               On the other hand, it follows from $(i)$ that $x_{3}, x_{4}, x_{5}$ all have neighbours in $N \cap Y_{2}$ hence they all have degree at least $2$ in $Y_{2}$.
               It follows that $v$ has special degree $1$.
               If we assume that $v$ is not $1$-special then $v$ has special degree at most $3-p$, since $p$ of the vertices $x_{3}, x_{4}, x_{5}$ have degree $2$ in $Y_{2}$.

        $(vii)$ Assume for contradiction that there is $v$, say in $Y_{4} \setminus N$.
                Observe that if $y$ is $i$-special then it follows from $(ii)$ and $(iv)$ that $d(x_{i}) \ge 7$, hence if $d_{s}(y) \ge 3$ we obtain contradiction by finding three vertices in $X$ of degree at least $7$.
                Therefore we can assume that $d_{s}(y) = 2$.

                If $y$ is $5, i$-special, then from $(ii)$ and $(iv)$ we have that $d(x_{5}) \ge 8$ and $d(x_{i}) \ge 7$ hence again we obtain a contradiction.
                Therefore we can assume that $y$ is not $5$-special.
                If $y$ is $2,3$-special then $d(x_{2}), d(x_{3}) \ge 7$ and from $(iv)$ we have that $d(x_{4}), d(x_{5}) \ge 7$.
                Hence we can assume that $y$ is $2, 4$-special or $3, 4$-special.
                Suppose that the former is the case.
                Then $d(x_{2}), d(x_{4}) \ge 7$.
                It follows that $d(x_{1}) = 6$.
                Therefore by maximality $(x_{1}, y)$ and from $(v)$ we have that every vertex in $Y_{2} \cup Y_{3} \cup Y_{4}$ has special degree at most $1$ and no vertex in $Y_{5}$ has special degree bigger than $2$.
                Which gives a contradiction since the sum of special degree is then at most $7$.
    \end{proof}

    We are now ready to finish showing that $\alpha(5, 5) \ge 33$.
    We consider several cases depending on the special degree of $y$.
    \mycase{$d_{s}(y) = 4$}
        Consider the $4$-partite graph $H = G[N(y)]$ with an independent set $X' = \{x_2,x_3,x_4,x_5\}$. 
        Clearly, $H$ is $K_4$-free since $G$ is $K_5$-free. 
        We modify $H$ by keeping adding admissible edges inside $H \setminus X'$ until every admissible non-edge inside $H \setminus X'$ is $K_4$-saturated. 
        We claim that $H$ is $K_4$-partite-saturated, which would imply that $e(X',H \setminus X') \ge \alpha(4,4) = 18$ by the previous subsection. 
        It remains to show that the admissible non-edges with at least one endpoint in $X'$ are $K_4$-saturated.

        Consider the non-edge $x_iy_j$ with $y_j \in V_j \cap H$ (possibly $y_j=x_j$) and distinct $2 \le i,j \le 5$. 
        Since the non-edge $x_iy_j$ is $K_5$-saturated in $G$, the common neighborhood in $G$ of $x_i$ and $y_j$ contains a $K_3$ consisting of one vertex from each part of $G \setminus \left(V_i \cup V_j\right)$. 
        Since $y$ is $i$-special, this $K_3$ must contain $y$, and so the common neighborhood in $H$ of $x_i$ and $y_j$ contains a $K_2$, i.e. $x_iy_j$ is $K_4$-saturated in $H$ as required. 

        Recall that $y$ has two non-neighbors,  $y_2 \in V_2$ and $y_3 \in V_3$.
        By Lemma~\ref{lem:special:v}, $x_iy_2$ is an edge for $i \not= 2$ and $x_iy_3$ is an edge for $i \not= 3$. 
        We shall partition the edges between $X$ and $X^c$ as follows:
            \begin{align*}
                e(X,X^c)
                &\ge e(X',H \setminus X') + d(x_1) + e(X, y) + e(X', y_2) + e(X', y_3) \\
                &\ge 18 + 6 + 4 + 3 + 3 = 34,
            \end{align*}
        contradicting the assumption.
    \mycase{$d_{s}(y) = 3$}
        If $y$ is $4,5$-special then from Lemma~\ref{lemma33:ii}~and~\ref{lemma33:iv} we have that $d(x_{4}), d(x_{5}) \ge 7$.
        Otherwise $y$ is $2,3$-special and hence it follows from Lemma~\ref{lemma33:v} that $d(x_{4}), d(x_{5}) \ge 7$.
        We shall obtain a contradiction by showing that $d(x_{1}) \ge 7$, hence showing that there are three vertices in $X$ with degrees at least $7$, which is against an assumption made in the beginning of the subsection.
        It follows from Lemma~\ref{lemma33:vi} with $p \ge 2$, that the sum of special degrees in $Y_{2} \cup Y_{3}$ is at most $2$.
        Since the sum of special degrees is at least $8$, it follows that there is a special vertex in $Y_{4} \cup Y_{5}$ with special degree at least $2$.
        Therefore from the maximality of $d(x_{1})$ we have that $d(x_{1}) \ge 7$.
    \mycase{$d_{s}(y) = 2$}
        We split this case into three subcases. 
        \mysubcase{$y$ is $2, 3$-special}
            It follows from Lemma~\ref{lemma33:v} that $d(x_{4}), d(x_{5}) \ge 7$.
            We shall obtain a contradiction by showing that $d(x_{1}) \ge 7$, hence showing that there are three vertices in $X$ with degrees at least $7$, which is against an assumption made in the beginning of the subsection.
            It follows from Lemma~\ref{lemma33:vi} that the sum of special degrees in $Y_{2} \cup Y_{3}$ is at most $2$.
            Since the sum of special degrees is at least $8$, it follows that there is a special vertex in $Y_{4} \cup Y_{5}$ with special degree at least $2$.
            Therefore from the maximality of $d(x_{1})$ we have that $d(x_{1}) \ge 7$.
        \mysubcase{$y$ is $4, 5$-special}
            It follows from Lemma~\ref{lemma33:ii}~and~\ref{lemma33:iii} that $d(x_{4}),d(x_{5}) \ge 7$.
            We shall obtain a contradiction by showing that $d(x_{1}) \ge 7$, hence showing that there are three vertices in $X$ with degrees at least $7$, which is against an assumption made in the beginning of the subsection.
            It follows from Lemma~\ref{lemma33:vi} that the sum of special degrees in $Y_{2} \cup Y_{3}$ is at most $3$.
            Since the sum of special degrees is at least $8$, it follows that there is a special vertex in $Y_{4} \cup Y_{5}$ with special degree at least $2$.
            Therefore from the maximality of $d(x_{1})$ we have that $d(x_{1}) \ge 7$.
        \mysubcase{$y$ is neither $2, 3$-special nor $4, 5$-special}
            Without loss of generality we can assume that $y$ is $2, 4$-special.
            It follows from Lemma~\ref{lemma33:ii}~and~\ref{lemma33:iii} that $d(x_{4}) \ge 7$ and from Lemma~\ref{lemma33:vi} with $p \ge 2$ that there is no special vertex in $Y_{2} \cup Y_{3}$ with special degree bigger than $1$.
            Hence there is either a vertex in $Y_{4}$ with special degree at least $2$ or a vertex in $Y_{5}$ with special degree at least $3$.
            Therefore we can assume that $d(x_{1}) = 7$ as otherwise we obtain a contradiction to the maximality of $\left( d(x_{1}), d_{s}(y) \right)$.
            
            We shall obtain a contradiction by showing that at least one of $x_{2}$, $x_{3}$ or $x_{5}$ has degree at least $7$, thus finding three vertices with degree at least $7$.
            Suppose $d(x_{2}) = d(x_{3}) = d(x_{5}) = 6$.
            Observe that if there is a vertex in $X_{4}$ of special degree bigger than $2$ then we obtain a contradiction to the maximality of $\left( d(x_{1}), d_{s}(y) \right)$.
            Therefore there are two vertices in $X$ with at least two neighbours in $X_{4}$.
            Suppose that $i \in \{3, 5 \}$ and $x_{i}$ has at least two neighbours in $X_{4}$.
            Then it follows from Lemma~\ref{lemma33:i} that $d_{N}(x_{i}) \ge 4$, and hence $x_{i}$ has degree at least $7$ as $x_{i}$ has at least three neighbours outside $N$.
            We can therefore assume that $x_{3}$ and $x_{5}$ have only one neighbour in $X_{4}$.
            For the same reason we can assume that $x_{3}$ has only one neighbour in $Y_{5}$.
            If $x_{2}$ has two neighbours in $Y_{5}$ then $d_{N}(x_{2}) \ge 5$ and therefore $d(x_{2}) \ge 7$.
            Hence we can assume that there is $z_{5} \in Y_{5}$ which is $2,3$-special.

            Suppose $\left\{ z_{1}, z_{2}, z_{4} \right\}$ is $x_{3}x_{5}$-saturating, with $z_{i} \in V_{i}$.
            We claim that $y \neq z_{1}$ and $z_{2} \not\in N$.
            Suppose for contradiction that it is not the case.
            If $y$ is $x_{3}x_{5}$-saturating then from $(iii)$ we have that $d_{N}(x_{3}) \ge 4$ hence we obtain a contradiction.
            We can therefore assume that $y$ is not $x_{3}x_{5}$-saturating and hence $z_{1} \neq y$.
            Whence $z_{2} \in N$.
            Observe that by Lemma~\ref{lemma33:vii} we have $z_{4} \in N$.
            Recall that $\left\{ z_{1}, z_{2}, z_{4} \right\}$ form a triangle and therefore there is an edge between $z_{2}, z_{4}$.
            By assumption $z_{2}$ and $z_{4}$ are neighbours of $y$, hence $y, z_{2}, z_{4}$ form a triangle, and therefore $y$ is $x_{3}x_{5}$-saturating since $y, z_{2}, z_{4}$ belong to the common neighbourhood of $x_{3}$ and $x_{5}$, which contradicts the assumption that $y$ is not $x_{3}x_{5}$-saturating.

            We shall obtain a contradiction by showing that $z_{1}, z_{2}, x_{3}, z_{4}, z_{5}$ form a copy of $K_{5}$.
            Indeed, by assumption $\left\{ z_{1}, z_{2}, z_{4} \right\}$ is $x_{3}x_{5}$-saturating and similar analysis to the one made in the proof of Lemma~\ref{lemma33:v} shows that $\left\{z_{2}, z_{4}, z_{5}\right\}$ is $x_{1}x_{3}$-saturating.
            Since $y$ is $2$-special it follows that $x_{2}$ is not adjacent to $z_{1}$, and moreover $z_{5}$, as the only neighbour of $x_{2}$ in $Y_{5}$, is $x_{2}z_{1}$-saturating, and therefore there is an edge between $x_{2}$ and $z_{5}$.
            Hence we have that $z_{2}, z_{4}, z_{5}$ form a triangle, $z_{1}, z_{2}, z_{4}$ form a triangle, and $z_{1}, z_{5}$ are adjacent.
            It easy to see now that $z_{1}, z_{2}, x_{3}, z_{4}, z_{5}$ form a copy of $K_{5}$.

\qed

\section{The diagonal case \texorpdfstring{$\alpha(r,r)$}{alpha(r,r)}}
\label{sec:diagonal}

\subsection{Proof of Theorem~\ref{thm:alpha}\texorpdfstring{$(iv)$}{(iv)}}

We have seen that the lower bound $\alpha(k,r) \ge k(2r-4)$ in Theorem~\ref{thm:alpha}$(i)$ is attained for some $k$. In this subsection, we show that this is not the truth for the diagonal case $k=r\ge 4$, i.e. $\alpha(r,r) \ge r(2r-4) + 1$. We shall again use the concept of special vertices introduced in Section~\ref{sec:alpha(iii)}.

Suppose for contradiction that for some $r \ge 4$, $\alpha(r,r) = r(2r-4)$, i.e. there exists a $K_r$-partite-saturated $r$-partite graph $G = V_1\cup V_2\cup \dots \cup V_r$ containing an independent set $X = \{x_1, x_2,\dots, x_r\}$ with $x_i \in V_i$ for all $i$ such that $\sum_{i=1}^r d(x_i) = r(2r-4)$. Lemma~\ref{lem:deg} tells us that we must have $d(x_i) = 2r-4$ for all $i$ and each $x_i$ has some neighbor in $V_j$ for $j \not= i$. Therefore, each $x_i$ creates at least two special vertices, and so the sum of the special degrees of the vertices in $X^c$ is at least $2r$. By Lemma~\ref{lem:special:iv}, there is a vertex of special degree at least $3$, say $y_1 \in V_1$.

We observe that $y_1$ has at least two non-neighbors, say $y_2 \in V_2$ and $y_3 \in V_3$ by Lemma~\ref{lem:special:vi}. Since $y_1$ has special degree at least $3$, we can pick $i \ge 4$ such that $y_1$ is $i$-special. By Lemma~\ref{lem:special:v}, $y_2$ and $y_3$ are neighbors of $x_i$. Therefore,
	\[
		|N(x_i) \cap N(y_1)| = d(x_i) - |N(x_i) \setminus N(y_1)|
		\le (2r-4) - 3 = 2r-7.
	\]
On the other hand, we shall obtain a contradiction by showing that the graph $H = G[N(x_i) \cap N(y_1)]$ contains at least $\beta_1(r-2,r-2) = 2(r-3)$ vertices. It is sufficient to prove that $H$ is an $(r-2)$-partite $K_{r-2}$-free graph such that the subgraph induced by any $k-3$ parts contains a $K_{r-3}$. Clearly, $H$ is $K_{r-2}$-free since $G$ is $K_r$-free. The parts of $H$ are $N(x_i)\cap N(y_1)\cap V_j$ for $j \not\in [r] \setminus \{1,i\}$. It remains to verify that the deletion of the part $N(x_i)\cap N(y_1)\cap V_j$ does not destroy all the $K_{r-3}$. Since the non-edge $x_ix_j$ is $K_r$-saturated in $G$, the common neighborhood in $G$ of $x_i$ and $x_j$ contains a $K_{r-2}$ consisting of one vertex from each part of $G \setminus \left(V_i \cup V_j\right)$. Since $y_1$ is $i$-special, this $K_{r-2}$ must contain $y_1$, and so the common neighborhood $N(x_i)\cap N(y_1)\cap N(x_j) \subset H$ contains a $K_{r-3}$ not using the vertices of $V_j$ as required.
\qed

\subsection{Remark on \texorpdfstring{$\beta_2(r,r-1)$}{beta2(r,r-1)}}

Recall from Proposition~\ref{prop:alpha} that $\alpha(r,r) \le (r-1)\beta_2(r,r-1)$. Thus, a better estimate on $\beta_2$ would translate to a better understanding of the saturation numbers. While we could not find the exact value of $\beta_2(r,r-1)$, we suspect that
$\beta_2(r,r-1)=3r-6$ as mentioned in Conjecture~\ref{conj:beta2}. In this subsection, we make an observation about $\beta_2(r,r-1)$ which can be viewed as a first step towards determining its exact value. For simplicity of notation, let us write $\beta_2(r)=\beta_2(r,r-1)$.

\begin{proposition}
Either
\begin{itemize}
\item $\beta_2(r)=3r-6$ for all $r \ge 3$, or
\item $\beta_2(r)\leq (c+o(1))r$ for some constant $c<3$, as $r\rightarrow\infty$.
\end{itemize}
\end{proposition}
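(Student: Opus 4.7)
The plan is to establish a combining inequality and extract the dichotomy via iteration. Specifically, I would show
\[
    \beta_2(r_1 + r_2 - 2) \le \beta_2(r_1) + \beta_2(r_2) \qquad (r_1, r_2 \ge 3), \quad (\ast)
\]
and then argue as follows: if $\beta_2(r_0) \le 3r_0 - 6 - s$ for some integer $s \ge 1$ and some $r_0 \ge 3$, iterating $(\ast)$ a total of $k-1$ times gives $\beta_2(k(r_0 - 2) + 2) \le k(3r_0 - 6 - s)$. Hence along the subsequence $R = k(r_0 - 2) + 2$,
\[
    \frac{\beta_2(R)}{R} \;\longrightarrow\; \frac{3r_0 - 6 - s}{r_0 - 2} \;=\; 3 - \frac{s}{r_0 - 2} \;<\; 3.
\]
Since Lemma~\ref{lem:betaupper} yields $\beta_2(r+1) \le \beta_2(r) + 3$, the ratio $\beta_2(r)/r$ only varies by $O(1/r)$ between consecutive values of $r$, and interpolation extends the bound to every $r$, giving $\beta_2(r) \le (c + o(1))r$ with $c = 3 - s/(r_0 - 2) < 3$.

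For the combining inequality itself, take optimal graphs $G_1$ on parts $U_1, \ldots, U_{r_1}$ (with $\beta_2(r_1)$ vertices) and $G_2$ on parts $W_1, \ldots, W_{r_2}$ (with $\beta_2(r_2)$ vertices). Build $G$ on $r_1 + r_2 - 2$ parts by identifying $U_{r_1 - 1}$ with $W_2$ and $U_{r_1}$ with $W_1$, so that these two merged parts each contain vertices from both $G_1$ and $G_2$; keep the remaining parts as they are, and include all admissible edges between $V(G_1)$ and $V(G_2)$. The total vertex count is $\beta_2(r_1) + \beta_2(r_2)$. A clique in $G$ restricts to a clique of size at most $r_i - 2$ in each $G_i$ by $K_{r_i - 1}$-freeness, and the cross-edges are complete, so $\omega(G) \le r_1 + r_2 - 4$; in particular $G$ is $K_{r_1 + r_2 - 3}$-free, as required.

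The $\beta_2$-property for $G$ (that any $r_1 + r_2 - 4$ parts of $G$ contain a $K_{r_1 + r_2 - 4}$) is verified by a case analysis on where the two deleted parts come from: both $G_1$-only, both $G_2$-only, one of each, or at least one shared. In each case one chooses a $K_{r_1 - 2}$ in $G_1$ and a $K_{r_2 - 2}$ in $G_2$, each supplied by the $\beta_2$-property of $G_1$ and $G_2$ applied to an appropriate pair of deletions, so that on the two merged parts exactly one of the two cliques contributes a vertex (never both). The cross-edges then combine them into a $K_{r_1 + r_2 - 4}$ covering the remaining $r_1 + r_2 - 4$ parts.

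The main obstacle is the bookkeeping for the case analysis. For instance, when one deleted part is shared and the other is $G_1$-only, $G_1$'s clique is forced to use the surviving shared part (otherwise it spans only $r_1 - 3$ parts); this then forces $G_2$'s clique to avoid both shared parts, and one must check that the relevant 2-deletion of $G_2$ still supplies a $K_{r_2 - 2}$. The flexibility afforded by the definition of $\beta_2$ (which guarantees a $K_{r_i - 2}$ in $G_i$ after any 2-deletion) is exactly what makes these consistent choices possible.
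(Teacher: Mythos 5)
Your proof is correct, and it takes a genuinely different route from the paper's. The paper's combining step builds a graph on $r_1+r_2$ parts by taking the disjoint union of extremal graphs $G_1, G_2$, joining all admissible cross-pairs, and then appending six auxiliary vertices arranged in a triangle-free configuration, yielding $\beta_2(r_1+r_2)\le\beta_2(r_1)+\beta_2(r_2)+6$; the dichotomy is then extracted by iterating on multiples of a fixed $s$ and patching with Lemma~\ref{lem:betaupper}. You instead identify two parts of $G_1$ with two parts of $G_2$ and add the admissible cross-edges, with no auxiliary vertices, giving the cleaner $\beta_2(r_1+r_2-2)\le\beta_2(r_1)+\beta_2(r_2)$. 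This inequality is in fact strictly stronger than the paper's (apply Lemma~\ref{lem:betaupper} twice to recover it), and it expresses subadditivity of $r\mapsto\beta_2(r+2)$, so the dichotomy is essentially Fekete's lemma — your "interpolation" step is a hands-on version of the subsequence-to-full-sequence argument. The part-merging trick requires the careful case analysis you sketch (ensuring the two cliques contribute to disjoint sets of shared parts), and all the cases go through: when both deletions miss the merged parts one of the two cliques is forced onto both merged parts and the other can be steered off both via the $\beta_2$-property; when a merged part is deleted the surviving merged part must be covered by the clique of the graph with a deleted private part; when both merged parts are deleted each $G_i$'s clique is steered to avoid both. The net effect is the same conclusion, but your combining lemma is tidier, more efficient by an additive constant, and arguably more reusable.
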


\begin{proof} 
The result is an immediate consequence of the following lemma.

\begin{lemma}
$\beta_2(r_1+r_2)\leq \beta_2(r_1)+\beta_2(r_2)+6$ for $r_1,r_2 \ge 3$.
\end{lemma}
\begin{proof}

For $i \in \{1,2\}$, let $G_i=V_{i,1}\cup V_{i,2}\cup \dots\cup V_{i,r_i}$ be a $K_{r_i-1}$-free $r_i$-partite graph on $\beta_2(r_i)$ vertices such that the subgraph induced by any $r_i-2$ parts contains a $K_{r_i-2}$. We shall construct a $K_{r_1+r_2-1}$-free $(r_1+r_2)$-partite graph $G$ from $G_1$ and $G_2$ with $|G|=|G_1|+|G_2|+6$ by starting with the disjoint union of $G_1$ and $G_2$ and then adding six new vertices $U=\{x_1,x_2,y_1,y_2,z_1,z_2\}$ as follows: add $x_i,y_i$ to $V_{i,1}$ and add $z_i$ to $V_{i,2}$ for $i \in \{1,2\}$. Now, join all admissible pairs between $U$ and $V(G)\setminus U$, and add the edges $x_1z_1,x_2z_2,y_1y_2,z_1z_2,y_1z_2,z_1y_2$ inside $U$.

First, we show that $G$ is $K_{r_1+r_2-1}$-free. Suppose otherwise. Since $G_i$ is $K_{r_i-1}$-free for $i \in \{1,2\}$, this $K_{r_1+r_2-1}$ must contain at least three vertices forming a triangle in $U$, contradicting the fact that $G[U]$ is triangle-free. It remains to show that the deletion of any two parts does not destroy all the $K_{r_1+r_2-2}$. Suppose first that both deleted parts are from $G_1$. Since $G_1$ contains a $K_{r_1-2}$ not using these two parts and $G_2$ contains a $K_{r_2-2}$ not using $V_{2,1}$ and $V_{2,2}$, we obtain a $K_{r_1+r_2-2}$ not using the deleted parts, formed by these two cliques and $x_2,z_2$. Now suppose that one of the deleted parts is from $G_1$ and the other is from $G_2$. For $i \in \{1,2\}$, let $V_i$ be a part in $\{V_{i,1},V_{i,2}\}$ which was not deleted. By construction, $G[U]$ contains an edge between $V_{1,j}$ and $V_{1,l}$ for all $j,l \in \{1,2\}$ and so there exists an edge in $G[U]$ between $V_1$ and $V_2$, say $e$. Since $G_1$ contains a $K_{r_1-2}$ not using the deleted part in $G_1$ and $V_1$, and $G_2$ contains a $K_{r_2-2}$ not using the deleted part in $G_2$ and $V_2$, we obtain a $K_{r_1+r_2-2}$ not using the deleted parts, formed by these two cliques and the endpoints of $e$.
\end{proof}

Suppose that $\beta_2(s)<3s-6$ for some $s \ge 3$. We shall show that $\beta_2(r) \le (c+o(1))r$ with $c=\frac{\beta_2(s)+6}{s}<3$. Applying the lemma and induction on $m$, we deduce that $\beta_2(ms) \le cms-6$ for all positive integer $m$. Hence, writing $r=ms+t$ with $3 \le t \le s+2$ and applying the lemma again,
\[
\beta_2(r) \le \beta_2(ms)+\beta_2(t)+6 \le cms+d \le \left(c+\frac{d}{r}\right)r = (c+o(1))r
\]
where $d=\max\{\beta_2(t): 3 \le t \le s+2\}$.
\end{proof}

\section{Proof of Theorem~\ref{thm:constant}}
\label{sec:thm:constant}

Theorems~\ref{thm:sat} and Theorem~\ref{thm:alpha}$(ii)$ imply that \[sat(n,k,r) = k(2r-4)n + o(n) \text{ if }
		\begin{cases}
				k = 2r-3,\text{ or}\\
				k \ge 2r-2 \text{ and }r \equiv 0 \mod 2,\text{ or}\\
				k \ge 2r-1 \text{ and }r \equiv 2 \mod 3.
		\end{cases}\]
In this section, we shall show that the $o(n)$ term can be replaced with $O(1)$. The upper bound follows from Proposition~\ref{prop:satupper} and Theorem~\ref{thm:alpha}$(ii)$. We prove that the lower bound holds for any $k \ge r \ge 3$ using the fact that $\beta_1(k-1,r-1) = 2r-4$.

\begin{proposition}
For $k \ge r \ge 3$, there is an integer $C_{k,r}$ such that $sat(n,k,r) \ge k(2r-4)n + C_{k,r}$, for every integer $n \ge 0$.
\end{proposition}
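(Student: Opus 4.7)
The plan is to adapt the lower bound in Theorem~\ref{thm:sat}, exploiting the individual degree bound $d(v)\ge \beta_1(k-1,r-1) = 2r-4$ from Lemma~\ref{lem:deg} rather than the weaker sum bound $\sum_i d(v_i)\ge \alpha(k,r)$, and tightening the error terms from $o(n)$ to $O(1)$.

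For $n\le n_0(k,r)$ the proposition is trivial if $C_{k,r}$ is chosen sufficiently negative, so assume $n$ is large. Fix a constant threshold $d=d(k,r)$ (for concreteness $d=2r-3$) and, following the proof of Theorem~\ref{thm:sat}, partition each $V_i$ into $V_i^+=\{v\in V_i:d(v)\ge d\}$ and $V_i^-=V_i\setminus V_i^+$. Apply Lemma~\ref{lemma:lower} to produce a set $U$ of size at most $C_{k,d}$ such that $W:=\bigcup_i V_i^-\setminus U$ is independent. Provided each $V_i^-\setminus U$ is nonempty, any choice of representatives is an independent transversal, so Lemma~\ref{lem:deg} gives $d(v)\ge 2r-4$ for every $v\in W$, and since $W$ is independent,
\[
 e(G) \;\ge\; \sum_{v\in W} d(v) \;\ge\; (2r-4)|W| \;\ge\; (2r-4)\!\left(kn - \textstyle\sum_i|V_i^+| - |U|\right).
\]

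The crux is bounding $\sum_i|V_i^+|$ by a constant, so that the display above yields $e(G)\ge k(2r-4)n + C_{k,r}$ directly. I would do this by a case split, combining the above with the handshake inequality
\[
 2e(G) \;=\; \sum_v d(v) \;\ge\; (2r-4)\bigl(kn - \textstyle\sum_i|V_i^+|\bigr) + d\sum_i|V_i^+| \;=\; (2r-4)kn + (d-(2r-4))\textstyle\sum_i|V_i^+|.
\]
When $\sum_i|V_i^+|$ is bounded by a suitable constant $M=M(k,r)$, the edge-count bound closes and delivers $e(G)\ge k(2r-4)n - (2r-4)(M + C_{k,d})$. When $\sum_i|V_i^+|$ exceeds $M$, the handshake bound provides ``extra'' edges through the high-degree vertices that more than compensate.

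The main obstacle is that these two bounds, together with the crude control $|U|\le 4^{d^2}\binom{k}{2}$ available from Lemma~\ref{lemma:lower}, do not obviously overlap to cover every case with only $O(1)$ slack: making $d$ larger tightens the edge-count bound but blows up $|U|$. Overcoming this will likely require a finer structural input--for example, showing that any $K_r$-partite-saturated $k$-partite graph admits a vertex cover whose size is bounded by a function of $k$ and $r$ alone, which is consistent with the extremal constructions (where the ``hubs'' in $X^c$ form such a cover) and which, by the edge-count bound $e(G)\ge (2r-4)(kn-\mathrm{vc}(G))$, would immediately close the proof with $C_{k,r} = -(2r-4)\mathrm{vc}(G)$.
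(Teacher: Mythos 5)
Your proposal does not close the argument, and you correctly flag the gap yourself: the $V_i^+ / V_i^-$ decomposition inherited from the proof of Theorem~\ref{thm:sat} only controls $\sum_i|V_i^+|$ up to $O(n/d)$, which gives an $o(n)$ error, not $O(1)$. The handshake side of your case split cannot rescue this: with $d$ of order $r$, the inequality $2e(G)\ge(2r-4)(kn-S)+dS$ only dominates $2k(2r-4)n$ when $S$ is itself of order $n$, precisely the regime where the other bound is useless, and increasing $d$ to tighten it blows up both $|U|$ from Lemma~\ref{lemma:lower} and the threshold $M$ at which the two estimates would need to meet. So the two cases never overlap with $O(1)$ slack, and the decomposition-by-degree-threshold route genuinely fails here. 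Your proposed remedy (a vertex cover of size bounded in $k,r$) is unproven, is not what the extremal analysis requires (one only needs a bounded set $U$ with $|N(v)\cap U|\ge 2r-4$ for every $v\notin U$, which is weaker than a vertex cover), and is not obviously true for an arbitrary $K_r$-partite-saturated graph.

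The paper sidesteps the high/low-degree dichotomy entirely. It builds $U$ greedily: let $v_1\in V_1$ have minimum degree in $V_1$, and inductively let $v_i$ have minimum degree in $V_i\setminus\bigl(N(v_1)\cup\cdots\cup N(v_{i-1})\bigr)$; then set $U=N(v_1)\cup\cdots\cup N(v_k)$. If some $d(v_i)\ge 2k(2r-4)$, then (since $v_i$ is a minimum-degree vertex among the at least $n-\sum_{j<i}d(v_j)$ vertices of $V_i$ not yet covered) one already gets $e(G)\ge e(V_i,V_i^c)\ge d(v_i)\bigl(n-\sum_{j<i}d(v_j)\bigr)\ge k(2r-4)n$ directly. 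Otherwise every $d(v_i)$ is bounded, so $|U|$ is bounded by a function of $k,r$ alone, and for any $v\notin U$ each non-edge $vv_i$ is $K_r$-saturated, so $G[N(v)\cap U]$ is a $K_{r-1}$-free $(k-1)$-partite graph whose every $(k-2)$-subfamily of parts contains a $K_{r-2}$; hence $|N(v)\cap U|\ge\beta_1(k-1,r-1)=2r-4$ and $e(G)\ge e(U,U^c)\ge(2r-4)(kn-|U|)$. The crucial difference is that the paper's $U$ is defined by neighbourhoods of a carefully chosen transversal rather than by a degree threshold, which is exactly what lets its size be bounded by a constant rather than by $O(n/d)$.
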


\begin{proof}
Suppose, as we may, that $n$ is sufficiently large. Let $G = V_1 \cup V_2 \cup \dots \cup V_k$ be a $K_r$-partite-saturated $k$-partite graph with $|V_i|=n$ for all $i$. We shall find a subset $U$ of $V(G)$ of constant size such that every vertex in $U^c$ has at least $2r-4$ neighbors in $U$. Then we would be done since $e(G) \ge e(U,U^c) \ge (2r-4)(kn-|U|)$. Let $v_1$ be a vertex of smallest degree in $V_1$. Having defined $v_1,v_2,\dots,v_{i-1}$, let $v_i\in V_i$ be a vertex of smallest degree in $V_i\setminus \left(N(v_1)\cup N(v_2)\cup \dots \cup N(v_{i-1})\right)$. We shall take $U$ to be $N(v_1)\cup N(v_2)\cup \dots \cup N(v_k)$. Now we may assume that $d(v_i) < 2k(2r-4)$ for all $1\le i\le k$. Indeed, if $v_i$ is the first vertex in the sequence such that $d(v_i) \ge 2k(2r-4)$ then we are done since
	\[
		e(G) \ge e(V_i,V_i^c) \ge d(v_i)\left(n-\sum_{j<i} d(v_j)\right) \ge 2k(2r-4)\Big(n-2k(2r-4)(i-1)\Big) \ge k(2r-4)n
	\]
for sufficiently large $n$. Therefore, $U$ has size bounded by a function of $k$ and $r$. It remains to show that every vertex $v \in U^c$ has at least $2r-4$ neighbors in $U$. We shall prove that $H = G[N(v)\cap U]$ contains at least $\beta_1(k-1,r-1) = 2r-4$ vertices by showing that $H$ is a $K_{r-1}$-free $(k-1)$-partite graph such that the subgraph induced by any $k-2$ parts contains a $K_{r-2}$. Clearly, $H$ is $K_{r-1}$-free since $G$ is $K_r$-free. Without loss of generality, $v \in V_1$. The parts of $H$ are $N(v)\cap U\cap V_i$ for $2 \le i \le k$. The deletion of the part $N(v)\cap U\cap V_i$ does not destroy all the $K_{r-2}$ since the non-edge $vv_i$ is $K_{r-1}$-saturated in $G$, i.e. $N(v)\cap N(v_i) \subset H$ contains a $K_{r-2}$ not using the vertices of $V_i$.
\end{proof}

\section{Concluding remarks}
\label{sec:conclude}

We have reduced the problem of determining $sat(n,k,r)$ for large $n$ to that of $\alpha(k,r)$. Although, we have determined $\alpha(k,r)$ for some values of $k$ and $r$, a large number of cases remain unknown. In particular, the seemingly easiest case when $r$ is fixed and $k$ is large,  is still open.
\begin{problem}
Determine $\alpha(k,r)$ for $k \ge 2r-2$ and $r \equiv 1,3\mod{6}$. 
\end{problem}

For $k \ge 2r-2$ and $r \equiv 0,2,4,5\mod{6}$, we have determined $\alpha(k,r)$ except one missing case when $3$ is the smallest divisor of $r-2$ and $k = 2r-2$. Theorem~\ref{thm:alpha}$(i)$ implies that $\alpha(2r-2,r) \in \{(2r-3)^2,(2r-3)^2-1\}$ and we suspect that $\alpha(2r-2,r) = (2r-3)^2$. 

Not only we believe that $\beta_2(k,r) = 4r-k-2$ for $r < k \le 2r-1$ (see Conjecture~\ref{conj:beta2}) but we also think that the upper bound $\alpha(k,r) \le (k-1)\beta_2(k,r-1) \le (k-1)(4r-k-6)$ in Theorem~\ref{thm:alpha}$(i)$ is the correct value for $\alpha(k,r)$ in this case.
\begin{conjecture}
$\alpha(k,r) = (k-1)(4r-k-6)$ for $5 \le r \le k \le 2r-4$.
\end{conjecture}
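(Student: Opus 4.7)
The upper bound $\alpha(k,r)\le(k-1)(4r-k-6)$ is already furnished by Theorem~\ref{thm:alpha}$(i)$ via Proposition~\ref{prop:alpha} and Proposition~\ref{prop:beta2upper}, so the content of the conjecture is the matching lower bound. My plan is to extend the special-vertex machinery developed in Section~\ref{sec:alpha(iii)} for $\alpha(4,4)=18$ and $\alpha(5,5)\ge 33$, together with the diagonal argument of Section~\ref{sec:diagonal}, to the entire regime $r\le k\le 2r-4$. The crucial new structural feature is that $k-1<2(r-2)$ in this range, so any extremal configuration realizing $\beta_1(k-1,r-1)=2r-4$ is forced to place multiple vertices into at least $2r-k-3$ parts, giving a significantly more rigid local picture for the neighborhood $G[N(x_i)]$ of each minimum-degree $x_i$ than in the case $k\ge 2r-3$. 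The extra $(k-1)(4r-k-6)-k(2r-4)$ edges demanded by the conjecture should come from exploiting this rigidity.

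Concretely, let $G=V_1\cup\cdots\cup V_k$ be $K_r$-partite-saturated with an independent set $X=\{x_1,\dots,x_k\}$ realizing $\alpha(k,r)$, and assume for contradiction that $\sum_i d(x_i)<(k-1)(4r-k-6)$. Since $d(x_i)\ge 2r-4$ by Lemma~\ref{lem:deg}, only a constant number of $x_i$'s have surplus degree. The first step is a rigidity lemma: for each $x_i$ of minimum degree, $G[N(x_i)]$ must be very close to an extremal configuration for $\beta_1(k-1,r-1)$, so one determines the possible ways its two witnessing $K_{r-2}$ copies share parts and records the resulting pattern of special vertices. The second step is to run the pairwise-saturation argument globally: for each $i\ne j$, $N(x_i)\cap N(x_j)$ contains a $K_{r-2}$, and because each $H_i=G[N(x_i)]$ contains only a small number of $K_{r-2}$ copies, the witnessing cliques for different saturated pairs must coincide, forcing heavy reuse of vertices in $X^c$ as shared neighbors. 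A global double-count of pairs $(\{i,j\},K)$ with $K$ a $K_{r-2}$ in $N(x_i)\cap N(x_j)$, combined with refinements of Lemma~\ref{lem:special:iv} and Lemma~\ref{lem:special:vi} tailored to the regime $k<2r-3$, should finally produce the desired surplus of edges between $X$ and $X^c$ and yield a contradiction.

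The main obstacle is the last global step. Even for $(k,r)=(5,5)$, where the conjecture predicts $\alpha(5,5)=36$, the present paper only reaches $33$; the gap of three measures precisely the combinatorial slack in the current special-vertex argument when one attempts to compound local constraints across all $\binom{k}{2}$ saturated pairs. A full proof will likely require either establishing Conjecture~\ref{conj:beta2} as a prerequisite and then inverting Proposition~\ref{prop:alpha} via a stronger matching-based argument giving $\alpha(k,r)\ge(k-1)\beta_2(k,r-1)$, or a genuinely new lemma bounding, for each part $V_\ell$, the number of vertices of $V_\ell\cap X^c$ adjacent to $X$ in terms of the special-degree distribution on the other parts.
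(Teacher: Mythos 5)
The statement you were given is labeled a \emph{conjecture}, and indeed the paper never proves it: it appears among the concluding remarks precisely because the authors could not establish the lower bound. There is therefore no proof in the paper to compare against, and any ``proof'' you produce would be new research. To your credit, you recognize this and present a research plan rather than claim a result.

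Your framing is sound as far as it goes. You correctly note that the upper bound $\alpha(k,r)\le(k-1)(4r-k-6)$ follows from Proposition~\ref{prop:alpha} together with Proposition~\ref{prop:beta2upper}, so only the lower bound is in question. Your pigeonhole observation is also accurate: if $d(x_i)=2r-4$ and these neighbours are spread over $k-1<2r-4$ parts, at least $2r-k-3$ parts of $G[N(x_i)]$ must contain two or more of them. And your remark that even for $(k,r)=(5,5)$ the paper's special-vertex machinery stops at $33$, while the conjecture demands $36$, is a fair warning that the existing toolkit is measurably too weak.

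The genuine gap is that the two pillars of your plan are not proved and are not obviously provable. First, the ``rigidity lemma'' you invoke --- that for a minimum-degree $x_i$ the graph $H_i=G[N(x_i)]$ is close to an extremal configuration for $\beta_1(k-1,r-1)$ and hence supports only few copies of $K_{r-2}$ --- is asserted without argument. A $K_{r-1}$-free graph on $2r-4$ vertices can in principle contain many $K_{r-2}$'s, and nothing in the paper characterizes the extremal configurations for $\beta_1$ in a way you could lean on; establishing such a stability statement would be a substantial piece of work on its own. Second, the global double-count over pairs $\{i,j\}$ and witnessing cliques is only gestured at; you would need to show that the forced reuse of vertices translates into a linear-in-$k$ surplus of edges, and it is not clear the accounting closes (the surplus you must produce, $(k-1)(4r-k-6)-k(2r-4)$, already grows like $2r-6$ at $k=2r-4$). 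Your alternative route --- prove Conjecture~\ref{conj:beta2} and then ``invert'' Proposition~\ref{prop:alpha} to get $\alpha(k,r)\ge(k-1)\beta_2(k,r-1)$ --- also needs a new idea, since the lower bound in Proposition~\ref{prop:alpha} is $k\beta_1(k-1,r-1)$, and there is no argument in the paper that promotes the $\beta_2$-based construction from an upper bound to a matching lower bound. In short, this is a reasonable outline of where to look, but it is not a proof, and you should not present it as one.
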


We have shown that $33 \le \alpha(5, 5) \le 36$.
This is the smallest case for which the value of $\alpha$ is not yet known.
\begin{problem}
    Find $\alpha(5, 5)$.
\end{problem}
To prove the lower and upper bounds on $\alpha(k, r)$, we extensively used the bounds on $\beta_{1}(k, r)$ and $\beta_{2}(k, r)$.
We believe that determining the values of $\beta_{i}(k, r)$ is an interesting problem on its own.
\begin{problem}
    Determine $\beta_{i}(k, r)$ for  $k \ge r \ge 2$ and $2 \le i \le k - r +1$.
\end{problem}

We end the paper with a remark on a related problem. Recall that $sat(n,K_r)$ is the minimum number of edges in a $K_r$-free graph on $n$ vertices but the addition of an edge joining any two non-adjacent vertices creates a $K_r$. In the pioneer paper of Erd{\H o}s, Hajnal, and Moon~\citep{Erdos}, they determined $sat(n,K_r)$ by considering a more general problem where the graphs were not required to be $K_r$-free. Interestingly, the two problems have the same answer since the extremal graph is $K_r$-free. We remark that this phenomenon does not happen for partite saturation. Roberts~\citep{Roberts} studied the corresponding more general problem for $sat(K_{r \times n},K_r)$ and showed that the minimum number of edges in a $K_r$-saturated subgraph of $K_{r \times n}$ where the subgraph is allowed to contain $K_r$ is $\binom{r}{2}(2n-1)$ for $r \ge 4$ and sufficiently large $n$. On the other hand, Theorem~\ref{thm:sat} and Theorem~\ref{thm:alpha} imply that $sat(K_{r \times n},K_r) \ge r(2r-4)n + o(n) > \binom{r}{2}(2n-1)$ for sufficiently large $n$.

\bibliographystyle{siam}
\bibliography{partsat-2}

\end{document}